\newcommand{\mnorm}{\mc{N}}
\newcommand{\trucos}{{\ol{\cos}}}
\newcommand{\dens}[2]{\textstyle{\frac{#1}{#2}}}
\numberwithin{equation}{section}
\begin{document}
\title{Unbalanced Optimal Transport:\\ Dynamic and Kantorovich Formulations}
%\subtitle{}
%\author[rvt]{L\'ena\"ic~Chizat \corref{cor1}\fnref{fn1}}
%\ead{cvr@river-valley.com}
%\author[rvt,focal]{K.~Bazargan\fnref{fn1}}
%\ead{kaveh@river-valley.com}

%\author{L\'ena\"ic~Chizat \and
%		Gabriel~Peyr\'e	\and
 %       Bernhard~Schmitzer \and
  %     Fran\c{c}ois-Xavier~Vialard
%}
%\fntext[fn1]{This is the specimen author footnote.}
\date{}
\author{
\begin{tabular}{c}
	L\'ena\"ic Chizat \qquad Gabriel Peyr\'e\\
	Bernhard Schmitzer  \qquad Fran\c{c}ois-Xavier Vialard\\[2mm]
	Ceremade, Universit\'e Paris-Dauphine \\
	\texttt{\small\{chizat,peyre,schmitzer,vialard\}@ceremade.dauphine.fr}
\end{tabular} }

%\institute{Ceremade, Universit\'e Paris-Dauphine\\
 %             \email{\{chizat,peyre,schmitzer,vialard\}@ceremade.dauphine.fr}      
%}

\maketitle

%\setcounter{tocdepth}{1}
%\tableofcontents

% !TEX root = ../DynamicToStatic.tex

\begin{abstract}
This article presents a new class of distances between arbitrary nonnegative Radon measures inspired by optimal transport. These distances are defined by two equivalent alternative formulations: (i) a \emph{dynamic} formulation defining the distance as a geodesic distance over the space of measures (ii) a static ``Kantorovich'' formulation where the distance is the minimum of an optimization problem over pairs of couplings describing the transfer (transport, creation and destruction) of mass between two measures. Both formulations are convex optimization problems, and the ability to switch from one to the other depending on the targeted application is a crucial property of our models. 
Of particular interest is the Wasserstein-Fisher-Rao metric recently introduced independently by~\cite{ChizatOTFR2015,new2015kondratyev}. Defined initially through a dynamic formulation, it belongs to this class of metrics and hence automatically benefits from a static Kantorovich formulation. %Switching from the initial Eulerian expression of this metric to a Lagrangian point of view provides the generalization of Otto's Riemannian submersion to this new setting, where the group of diffeomorphisms is replaced by a semi-direct product of groups.
%
%This Riemannian submersion enables a formal computation of the sectional curvature of the space of densities and the formulation of an equivalent Monge problem. 
% Then, we develop a new Kantorovich formulation and prove a correspondence result between the dynamical formulations and Kantorovich formulations. As an application, we obtain the Kantorovich formulation of the model introduced in~\cite{ChizatOTFR2015} and we develop a Gamma-convergence result of the model to  standard optimal transport.
\end{abstract}

% !TEX root = ../DynamicToStatic.TEX

\section{Introduction}

Optimal transport is an optimization problem which gives rise to a popular class of metrics between probability distributions. We refer to the monograph of Villani~\cite{cedric2003topics} for a detailed overview of optimal transport. 
A major constraint of the resulting transportation metrics is that they are restricted to measures of equal total mass (e.g.\ probability distributions). In many applications, there is however a need to compare unnormalized measures, which corresponds to so-called \emph{unbalanced} optimal transport problems, following the terminology introduced in~\cite{benamou2003numerical}. Applications of these unbalanced metrics range from image classification~\cite{rubner1997earth,pele2008linear} to the processing of neuronal activation maps~\cite{GramfortMICCAI}.
This class of problems requires to precisely quantify the amount of transportation, creation and destruction of mass needed to compare arbitrary positive measures. While several proposals to achieve this goal have been made in the literature (see below for more details), to the best of our knowledge, there lacks a coherent framework that enables to deal with generic measures while preserving both the dynamic and the static perspectives of optimal transport. It is precisely the goal of the present paper to describe such a framework and to explore its main properties. 

%%%%%%%%%%%%%%%%%%%%%%%%%%%%%%%%%%%%%%
\subsection{Previous work}
In the last few years, there has been an increasing interest in extending optimal transport to the unbalanced setting of measures having non-equal masses. 
%%%
\paragraph{Dynamic formulations of unbalanced optimal transport. }

Several models based on the fluid dynamic formulation introduced in~\cite{benamou2000computational} have been proposed recently~\cite{OTmaasrumpf,lombardi2013eulerian,piccoli2014generalized,piccoli2013properties}. In these works, a source term is introduced in the continuity equation. They differ in the way this source is penalized or chosen.
We refer to~\cite{ChizatOTFR2015} for a detailed overview of these models.  

%%%
\paragraph{Static formulations of unbalanced optimal transport. }
Purely static formulations of unbalanced transport are however a longstanding problem. 
A simple way to address this issue is given in the early work of Kantorovich and Rubinstein~\cite{kantorovich1958space}. The corresponding ``Kantorovich norms'' were later extended to separable metric spaces by~\cite{hanin1999extension}. These norms handle mass variations by allowing to drop some mass from each location with a fixed transportation cost. 
The computation of these norms can in fact be re-casted as an ordinary optimal transport between normalized measures by adding a point ``at infinity'' where mass can be sent to, as explained by~\cite{guittet2002extended}. This reformulation is used in~\cite{GramfortMICCAI} for applications in neuroimaging. 
A related approach is the so-called optimal partial transport. It was initially proposed in the computer vision literature to perform image retrieval \cite{rubner1997earth,pele2008linear}, while its mathematical properties are analyzed in detail by \cite{caffarelli2010free,figalli2010optimal}. 
As noted in~\cite{ChizatOTFR2015} and recalled in Section \ref{sec:ExamplesPartial}, optimal partial transport is tightly linked to the generalized transport proposed in~\cite{piccoli2014generalized,piccoli2013properties} which allows a dynamic formulation of the optimal partial transport problem.
The contributions in \cite{piccoli2014generalized} were inspired by \cite{benamou2003numerical} where it is proposed to relax the marginal constraints and to add an $L^2$ penalization term instead.
%
%On a numerical aspect, while dynamic formulations of optimal transport (and generalizations) are useful for low-dimensional problems where the computational domain can be uniformly discretized, they do not scale to higher dimensions. In sharp contrast, the celebrated ``static'' formulation by Kantorovich~\cite{Kantorovich42} of optimal transport as a linear program is able to cope with discrete measures (i.e.\ sums of Diracs) in arbitrary dimension, with a complexity which scales only with the number of Diracs.
%%
\paragraph{Wasserstein-Fisher-Rao metric and relation with recent work.}

A new metric between measures of non equal masses has recently and independently been proposed by~\cite{ChizatOTFR2015,new2015kondratyev}. This new metric interpolates between the Wasserstein $W_2$ and the Fisher-Rao (also known as Hellinger) metrics. It is defined through a dynamic formulation, corresponding formally to a Riemannian metric on the space of measures, which generalizes the formulation of optimal transport due to Benamou and Brenier~\cite{benamou2000computational}. 

% This metric has also been introduced at the same time in~\cite{new2015kondratyev}, where they study associated topological properties,  gradient flows and Riemannian calculus on the space of measures.

\iffalse % % BEGIN COMMENT %%%%%%%%%%%%%
\todo{Gabriel: I think the following paragraph could be removed.}

Let us introduce this Wasserstein-Fisher-Rao metric, denoted by $\WF$, from a physical point of view and in a smooth setting. Let $\Omega$ be a convex domain in $\R^d$ and $\rho_0,\rho_1$ two smooth and positive functions of finite integrals. The distance between $\rho_0$ and $\rho_1$ is given by the minimization over $v(t,x) \in \R^d$ and a time dependent vector field and $g(t,x)$ a real valued function of
\begin{equation}\label{WF}
\frac12 \int_0^1 \left[ \int_{\Omega} |v(t,x)|^2 \rho(t,x) \d x  + \delta^2 \int_{\Omega} |\alpha(t,x)|^2 \rho(t,x) \d x \right] \d t
\end{equation}
under the generalized continuity constraint and the boundary constraints
\[
\partial_t \rho(t,x) + \nabla \cdot (\rho(t,x) v(t,x)) = \alpha(t,x)\rho(t,x) \quad \rho(0,\cdot) = \rho_0 , \quad  \rho(1,\cdot) = \rho_1\, .
\]
\fi % % END COMMENT %%%%%%%%%%%%%%%%%%%

In~\cite{ChizatOTFR2015}, we proved existence of geodesics, presented the limit models (for extreme values of mass creation/destruction cost) and proposed a numerical scheme based on first order proximal splitting methods. We also thoroughly treated the case of two Dirac masses which was the first step towards a Lagrangian description of the model. This metric is the prototypical example for the general framework developed in this article. It thus enjoys both a dynamic formulation and a static one (Sect.~\ref{sec:equivalence WF}). %It is also sufficiently simple so that its geometry (and in particular curvature) can be analyzed in detail, which is useful to get a deep insight about the properties of our generic class of metrics. 

%%%%%%%%%%%%%%%%%%%%%%%%%%%%%%%%%%%%%%
\subsection{Contribution}
\label{sec:IntroContribution}
The initial motivation of this article is studying of the  Wasserstein-Fisher-Rao ($\WF$) metric. For two non-negative densities $\rho_0$, $\rho_1$ on a domain $\Omega \subset \R^d$ it is informally obtained as
\begin{align}
	\label{eq:WFInformal}
	\WF^2(\rho_0,\rho_1) \eqdef \inf_{(\rho,v,\alpha)} \int_0^1 \int_{\Omega} \left( \frac{1}{2} |v_t(x)|^2 + \frac{1}{2} g_t(x)^2 \right)\,\d \rho_t(x)\,\d t
\end{align}
where $(\rho_t)_{t\in [0,1]}$ is a time-dependent density that interpolates between $\rho_0$ and $\rho_1$, $(v_t)_{t\in [0,1]}$ is a velocity field that describes the movement of mass and $(g_t)_{t\in [0,1]}$ a scalar field that models local growth and destruction of mass. They must together satisfy (distributionally) the continuity equation with source
\begin{align}
	\label{eq:ContinuityEqInformal}
	\partial_t \rho_t + \nabla \cdot (\rho_t\,v_t) = \rho_t\,g_t.
\end{align}

%This article presents two sets of contributions. The first one (Section~\ref{sec:geometry}) studies in detail the geometry of the $\WF$ metric and related functionals. It is both of independent interest and serves as a motivation to introduce the second class of contributions.
%Formula \eqref{eq:WFInformal} is formally interpreted as a particular case of a family of Riemannian metrics on a semi-direct product of groups between diffeomorphisms and scalar functions. The diffeomorphisms account for mass-conservative transportation, the scalar fields act as pointwise mass multipliers.
%
%The main result of this part is the formal derivation of a submersion of this semi-direct product of groups into the space of positive measures, equipped with a metric from this family (Proposition \ref{thm:Submersion}). This corresponds to a generalization to $\WF$ of the Riemannian submersion first introduced by Otto in the optimal transport case \cite{OttoPorousMedium}. 
%
%A first application of this result is the computation of the sectional curvature of the $\WF$ space (Proposition \ref{prop-curvature}). A second application is the definition of a Monge-like formulation, i.e.\ the computation of $\WF$ in terms of a transport diffeomorphism and a pointwise mass multiplier (Section~\ref{Monge}).

In Section \ref{sec:dynamic}, we generalize this model and look at a wider family of dynamic problems given by
\begin{align}
	\label{eq:DynamicInformal}
	C_D(\rho_0,\rho_1) = \inf_{(\rho,v,\alpha)} \int_0^1 \int_{\Omega} f\big(x,\rho_t(x),v_t(x) \cdot \rho_t(x), g_t(x) \cdot \rho_t(x) \big)\,\d x\,\d t
\end{align}
where the infimum is again taken over solutions of \eqref{eq:ContinuityEqInformal}. Here, for $(\rho,v,g)\in \R_+\times \R^d\times \R$, $f(x,\rho,v \cdot \rho, g \cdot \rho)$ gives the \emph{infinitesimal cost} of moving  a particle of mass $\rho$ at $x$ in direction $v$ while undergoing an infinitesimal scaling by the (signed) rate of growth $g$.
Note that in the two last arguments of $f$ we multiply $v$ and $\alpha$ by $\rho$. This corresponds to the velocity to momentum change of variables proposed in \cite{benamou2000computational} to obtain a convex problem. We show that this problem admits a dual formulation (Proposition~\ref{prop: dynamic dual}) and allows to define geodesic metrics on the space of nonnegative measures (Proposition~\ref{prop:metricproperty}).

Our main goal in this article is to propose a Kantorovich-like formulation for this family of problems, where the time variable is removed. This requires to adapt the definitions as done in Section~\ref{sec : general static problem}.  We introduce a new Kantorovich-like class of \emph{static} problems of the form
\begin{align}
	\label{eq:StaticInformal}
	C_K(\rho_0,\rho_1) = \inf_{(\gamma_0,\gamma_1)} \int_{\Omega \times \Omega} c(x,\gamma_0(x,y),y,\gamma_1(x,y))\,\d x\,\d y
\end{align}
where $(\gamma_0,\gamma_1)$ are two \emph{semi-couplings} between $\rho_0$ and $\rho_1$, describing analogously to standard optimal transport, how much mass is transported between any pair $x$, $y \in \Omega$. Two semi-couplings are required, to be able to describe changes of mass during transport. The function $c(x_0,m_0,x_1,m_1)$ determines the cost of transporting a quantity of mass $m_0$ from $x_0$ to a (possibly different) quantity $m_1$ at $x_1$.
It is a crucial assumption of our approach that $c(x,\cdot,y,\cdot)$ is jointly positively 1-homogeneous and convex in the two mass arguments. This ensures that \eqref{eq:StaticInformal} can be rigorously defined as an optimization problem over measures and that the resulting problem is convex.
A dual problem is established (Proposition \ref{prop: duality}).
Analogous to standard optimal transport, when $c$ induces a metric over pairs of location and mass, then \eqref{eq:StaticInformal} defines a metric over the space of nonnegative measures (Theorem \ref{th:metric}).

In Section \ref{sec:DynamicToStatic}, under suitable assumptions on $f$, we establish equivalence between \eqref{eq:StaticInformal} and \eqref{eq:DynamicInformal} when $c$ is chosen to be the \emph{minimal path} cost induced by $f$ (Theorem \ref{th: continuous static general} and Proposition \ref{prop : alternative dynamic/static}). This is our main result, which is analogous to the Benamou-Brenier formula for classical optimal transport.

Finally, we apply those results to two unbalanced optimal transport models. 
Section \ref{sec:ExamplesPartial} introduces a dynamic formulation and gives duality results for a family of metrics obtained from the optimal partial transport problem. This is reminiscent of --- and generalizes --- the results in \cite{piccoli2013properties,piccoli2014generalized}.
The case of the $\WF$ metric is treated in Section \ref{sec:equivalence WF}. In Section \ref{sec:StaticGamma}, it is shown how standard static optimal transport is obtained as a limit (in the sense of $\Gamma$-convergence) of the $\WF$ metric, thus complementing a previous result of~\cite{ChizatOTFR2015} obtained for dynamic formulations. 

\subsection[Relation with Liero,Mielke,Savare 2015]{Relation with \cite{LieroMielkeSavareLong,LieroMielkeSavareShort}}

After completing this paper, we became aware of the independent work of \cite{LieroMielkeSavareLong,LieroMielkeSavareShort}. In these two papers the authors develop and study a similar class of ``static'' transportation-like problems as here. This huge body of work contains many theoretical aspects that we do not cover. For instance measures defined over more general metric spaces are considered, while we work over $\R^d$.
The construction of~\cite{LieroMielkeSavareLong} defines three equivalent static formulations. Their third ``homogeneous'' formulation is closely related (by a change of variables) to our ``semi-couplings'' formulation. Their first formulation gives an intuitive and nice interpretation of this class of convex programs as a modification of the original optimal transportation problem where one replaces the hard marginal constraints by soft penalization using  Csisz\'ar $f$-divergences. The dual of this first formulation is related to the dual of our formulation by a logarithmic change of variables, see Corollary \ref{cor : static log-entropic}. Quite interestingly, the same idea is used in an informal and heuristic way by~\cite{FrognerNIPS} for applications in machine learning, where soft marginal constraints is the key to stabilize numerical results. 
The authors of~\cite{LieroMielkeSavareLong,LieroMielkeSavareShort} study dynamical formulations in the Wasserstein-Fisher-Rao setting (that they call the ``Hellinger-Kantorovich'' problem). This allows them to make a detailed analysis of the geodesic structure of this space. In contrast, we study a more general class of dynamical problems, but restrict our attention to the equivalence with the static problem. Another original contribution of our work is the proof of the metric structure (in particular the triangular inequality) for static and dynamic formulations when the underlying cost over the cone manifold $\Omega \times \R^+$ is related to a distance. 
% 
%Lastly, our geometric study of Section \ref{sec:geometry}, and in particular the Riemannian submersion structure, the explicit sectional curvature computation and the Monge problem appear to be original contributions. Note that along these lines, the work of~\cite{LieroMielkeSavareLong} proves a lower bound on the Alexandrov curvature in the $\WF$ case, which in particular allows these authors to state sufficient condition for the $\WF$ space to have positive curvature. In a smooth setting, we find similar results in Proposition \ref{prop-curvature} and Corollary \ref{th:CorollaryCurvature}.

%%%%%%%%%%%%%%
\subsection{Preliminaries and notation}
We denote by $C(X)$ the Banach space of real valued continuous functions on a compact set $X\subset \R^d$ endowed with the sup norm topology. Its topological dual is identified with the set of Radon measures, denoted by $\mathcal{M}(X)$ and the dual norm on $\mathcal{M}(X)$ is the total variation, denoted by $|\cdot|_{TV}$. Another useful topology on $\mathcal{M}(X)$ is the weak* topology arising from this duality: a sequence of measures $(\mu_n)_{n\in \N}$ weak* converges towards $\mu \in \mathcal{M}(X)$ if and only if for all $u \in C(X)$, $\lim_{n\rightarrow + \infty}\int_{X} u \, \d\mu_n = \int_{X} u \,\d\mu$. According to that topology, $C(X)$ and $\mathcal{M}(X)$ are topologically paired spaces (the elements of each space can be identified with the continuous linear forms on the other), this is a standard setting in convex analysis.
%Any bounded subset of $\mathcal{M}(X)$ (for the total variation) is relatively sequentially compact for the weak* topology.
We also use the following notations:
\begin{itemize}
	\item $\mathcal{M}(X)$ is the vector space of Radon measures and $\mathcal{M}_+(X)$ the cone of \emph{nonnegative} Radon measures;
	%\item For $M$ a given manifold, $TM$ denotes the tangent bundle of $M$ and $T_pM$ the tangent space at a point $p \in M$.
	%\item $\Dens(X)$ the set of finite Radon measures that have smooth positive density w.r.t.\ a reference volume form $\nu$;

	\item $\mu \ll \nu$ means that the $\R^m$-valued measure $\mu$ is absolutely continuous w.r.t.\ the positive measure $\nu$. We denote by $\dens{\mu}{\nu} \in (L^1(X,\nu))^m$ the density of $\mu$ with respect to $\nu$;

	\item Ffr a (possibly vector) measure $\mu$, $|\mu| \in \mathcal{M}_+(X)$ is its variation;

	%\item for a positively homogeneous function $f$, we denote by $f(\mu)$ the measure defined by $f(\mu)(A) \eqdef \int_A f(\d\mu/\d\lambda) \d\lambda$ for any measurable set $A$, with $\lambda$ satisfying $|\mu| \ll \lambda$. The homogeneity assumption guarantees that this definition does not depend on $\lambda$;
	%\item For a map $\varphi: M \mapsto N$ between two manifolds $M,N$, $T\varphi$ denotes the tangent map of $\varphi$. For a given Riemannian metric $g$ on $N$, the pull-back of $g$ by $\varphi$ is denoted by $\varphi^*g$ and defined by $(\varphi^*g)(x)(v_x,v_x) \eqdef g(\varphi(x))(T_x\varphi(v_x),T_x\varphi(v_x))$.
	\item $\pfwd{T} \mu$ is the image measure of $\mu$ through the measurable map $T: X_1 \to X_2$, also called the pushforward measure. It is given by $\pfwd{T} \mu (A_2) \eqdef \mu(T^{-1}(A_2))$;
	% When $X_1= X_2$ is a manifold and $T$ is a diffeomorphism, we denote $\pfwd{T}$ by $\pfwdM{T}$.

	\item $\delta_x$ is a Dirac measure of mass $1$ located at the point $x$;

	\item $\iota_{\mathcal{C}}$ is the (convex) indicator function of a convex set $\mathcal{C}$ which takes the value $0$ on $\mathcal{C}$ and $+\infty$ everywhere else;

	\item if $(E,E')$ are topologically paired spaces and $f : E \to \R \cup \{+ \infty\}$ is a convex function, $f^*$ is its Legendre transform i.e.\ for $y \in E'$, $f^*(y) \eqdef \sup_{x\in E} \langle x, y \rangle - f(x) $; %The subdifferential of $f$ is denoted $\D f$ and is the set valued map $x \mapsto \{ y\in E' : \forall x' \in E, \, f(x')-f(x) \geq \langle y, x'-x \rangle \}$.

	\item for $n \in \N$ and a tuple of distinct indices $(i_1,\ldots,i_k)$, $i_l \in \{0,\ldots,n-1\}$ the map
\begin{equation}
	\Proj_{i_1,\ldots,i_k} : \Omega^n \rightarrow \Omega^k
\end{equation}
denotes the canonical projection from $\Omega^n$ onto the factors given by the tuple $(i_1,\ldots,i_k)$;

%Finally, when $\mu$ is a measure on $[0,1] \times \Om$ which time marginal is absolutely continuous with respect to the Lebesgue measure on $[0,1]$, we denote by $(\mu_t)_{t\in[0,1]}$ the ($\d t$-a.e.\ uniquely determined) family of measures satisfying $\mu = \int_0^1 (\mu_t \otimes \delta_t ) \d t$, where $\otimes$ denotes the product of measures. This is a consequence of the disintegration theorem (see for instance ~\cite[Theorem 5.3.1]{ambrosio2006gradient}) which we frequently use without explicitly mentioning it. In order to alleviate notations, we replace the integral expression of $\mu$ by $\mu = \mu_t \otimes \d t$.
%

	\item the truncated cosine is defined by $\trucos : z \mapsto \cos(|z|\wedge \frac{\pi}{2})$;
	\item we denote by $f\wedge g$ the biggest function (or measure) that is smaller than both $f$ and $g$. This is to be contrasted with the notation $\min S$ which for a totally ordered set $S$ denotes its smallest element.
	%\todo{len:define the minimum of 2 functions/two measures}
\end{itemize}

% !TEX root = ../DynamicToStatic.TEX

%%%%%%%%%%%%%%%%%%%%%%%%%
% DYNAMIC
%%%%%%%%%%%%%%%%%%%%%%%%%

\section{A Family of Dynamic Problems}\label{sec:dynamic}
In this section, we describe a first approach to unbalanced optimal transport, which generalizes~\eqref{eq:WFInformal} and is inspired by the \emph{dynamic} formulation of classical optimal transport. 
%In this section we establish that a certain class of convex, positively homogeneous, optimization problems over the solutions of the continuity equation with source (informally introduced in \ref{eq:ContinuityEqInformal}) --- the dynamic problems --- admits unbalanced Kantorovich formulations that we introduced in Sect.\ \ref{sec : general static problem} --- the static problems.
%We prove an equivalence result between static and dynamic models for a large class of dynamic models.
%It is however not the aim of this manuscript to exhaust the full class of dynamic problems that allow for a static reformulation.
%As a direct application of this result, we obtain the equivalence for $\WF$ in section~\ref{sec:Examples}.
%%\label{sec : proof static}
%
Dynamic formulations of unbalanced transport models correspond intuitively to the computation of geodesic distances according to a function 
measuring the infinitesimal effort needed for ``acting'' on a mass $\rho$ at position $x$ according to the speed $v$ and rate of growth $g$ (cf.~\eqref{eq:DynamicInformal}). In this section, $\Omega$ is the closure of an open, connected, bounded subset of $\R^d$ with Lipschitz boundary.
%This should be contrasted with the static formulation~\eqref{eq: static problem} that depends on a cost function $c(x_0,m_0,x_1,m_1)$ between pairs of positions and masses.
%\todo{Len: I modified the previous paragraph; to me $f$ only appears after a change of variables}

\subsection{Continuity equation}
The continuity equation, informally introduced in \eqref{eq:ContinuityEqInformal}, is a key concept for the dynamic formulations of this article. It enforces a local mass preservation constraint for a density $\rho$, a flow field $v$ and a growth rate field $g$.
We now give a rigorous definition in terms of measures $(\rho,\omega,\zeta)$ where $\omega$ can informally be interpreted as the momentum $\rho \cdot v$ of the flow field and $\zeta$ corresponds to $\rho \cdot g$. As opposed to what is standard in the literature, we do not require a priori $\omega$ and $\zeta$ to have a density with respect to $\rho$ : this allows to deal with a wider class of action functionals.
\begin{definition}[Continuity equation with source]
\label{def:continuity equation}
For $(a,b)\in \R^2$ and a compact domain $\Omega \subset \R^d$, denote by $\mathcal{CE}_a^b(\rho_0,\rho_1)$ the affine subset of $\mathcal{M}([a,b]\times \Omega)\times \mathcal{M}([a,b]\times \Omega)^d \times \mathcal{M}([a,b]\times \Omega)$ of triplets of measures $\mu=(\rho,\omega,\zeta)$ satisfying the continuity equation 
\[
\partial_t \rho + \nabla \cdot \omega = \zeta
\]
 in the distributional sense, interpolating between $\rho_0$ and $\rho_1$ and satisfying homogeneous Neumann boundary conditions. More precisely we require
\begin{equation}
\label{eq:continuity weak}
\int_a^b \int_{\Omega} \partial_t \varphi \ \d\rho + \int_a^b \int_{\Omega} \nabla \varphi \cdot \d\omega + \int_a^b \int_{\Omega} \varphi \ \d\zeta = \int_{\Omega} \varphi(b,\cdot)\d\rho_1 - \int_{\Omega} \varphi(a,\cdot)\d\rho_0
\end{equation}
for all $\varphi \in C^1([a,b]\times \Omega)$.
\end{definition}
Below we collect two simple facts on this equation which will turn out useful. Their proof only involves elementary operations that we do not reproduce here for conciseness. The notation $B^d(0,r)$ denotes the open ball of radius $r$ in $\R^d$ centered at the origin.
\begin{proposition}
\label{prop:results on continuity equation}
\hfill
\begin{description}
%\item[Glueing] Let $(a,b,c) \in \R^3$ satisfying $a<b<c$, $(\rho_a,\rho_b,\rho_c) \in \mathcal{M}_+(\Omega)^3$, $\mu_A \in \mathcal{CE}_a^b(\rho_a,\rho_b)$ and $\mu_B \in \mathcal{CE}_b^c(\rho_b,\rho_c)$. Then the measure $\mu$ defined as $\mu_A$ for $t\in [a,b[$ and $\mu_B$ for $t\in [b,c]$ belongs to $\mathcal{CE}_a^c(\rho_a,\rho_c)$.
%\item[Extension.] Let $(a,b,c) \in \R^3$ satisfying $a<b<c$, $(\rho_a,\rho_b) \in \mathcal{M}_+(\Omega)^2$ and $\mu \in \mathcal{CE}_a^b(\rho_a,\rho_b)$. Then the measure $\tilde{\mu}$ obtained by extending $\mu$ as $(\rho_b , 0, 0)\otimes \d t$ on $]b,c]\times \Omega$ belongs to $\mathcal{CE}_a^c(\rho_a,\rho_b)$.
%
\item[Smoothing.] Let $\veps>0$, let $r^x_{\veps}$, $r^t_\veps$ mollifiers supported on the open balls $B^d(0,\frac{\veps}{2})$ and $B^1(0,\frac{\veps}{2})$ respectively and $r_{\veps} : (t,x) \mapsto r_{\veps}^t(t)r_{\veps}^x(x)$. Let $\mu = (\rho,\omega,\zeta)$ be a triplet of measures supported on $\R\times \Omega$ such that $\mu \in \mathcal{CE}_0^1(\rho_0,\rho_1)$, $\mu = (\rho_0,0,0)\otimes \d t$ for $t<0$, and $\mu = (\rho_1,0,0)\otimes \d t$ for $t>1$. Then for all $a\leq-\veps/2$, and $b\geq1+\veps/2$, $\mu \ast r_\veps \in \mathcal{CE}_a^b(\rho_0 \ast r_\veps^x,\rho_1 \ast r_\veps^x)$ on $\Omega+ \bar{B}^d(0,\veps/2)$.
\item[Scaling.] Let $\mu = (\rho,\omega,\zeta) \in \mathcal{CE}_a^b(\rho_a,\rho_b)$ with $a<b$ and $T : (t,x) \mapsto (T_t(t), T_x(x))$ be an affine scaling with multiplication factor $\alpha$ and $\beta$, respectively. Then $(\alpha \cdot T_\#\rho, \beta \cdot T_\# \omega,  T_\#\zeta) \in \mathcal{CE}_{T_t(a)}^{T_t(b)}((T_x)_\#(\rho_a),(T_x)_\#(\rho_b))$ on the domain $T_x(\Omega)$.
\end{description}
\end{proposition}

%\todo{Gabriel: I think using $m$ in place of $\rho$ below would be more coherent}

%
\subsection{Action minimizing problems}
In order to select an interpolation among all the solutions to the continuity equation, we choose that which minimizes an \emph{action} functional. A crucial feature of this action which allows to adapt results from classical optimal transport theory is $1$-homogeneity with respect to mass. It is necessary for the model to behave similarly for diffuse and discrete measures. 

Now, two choices of variables are possible. A first approach is to choose a Lagrangian function $L: \Omega \times \R^d \times \R\to \R$ which is a function of the position, the velocity and the rate of growth and to integrate it in space and time to define the action
\begin{equation}\label{eq:actionlagrangian}
\int_0^1 \int_\Omega L(x,v_t(x),g_t(x))\d \rho_t(x),
\end{equation}
which directly generalizes~\eqref{eq:WFInformal}. However, functions built this way are generally not convex even if $L$ is, nor is the continuity equation an affine constraint. This issues are solved if one consider the other set of variables $(\rho,\omega,\zeta)$ because (i) the continuity constraint (Definition~\ref{def:continuity equation}) is affine and (ii) the function $L(x,\omega/\rho,\zeta/\rho)\rho$ is convex and positively $1$-homogeneous if $L$ is convex (this transformation corresponds to taking the \emph{perspective function} of $L$).
In the following, we thus directly consider such convex functions of $(\rho,\omega,\zeta)$ that we refer to as \emph{infinitesimal costs}.

%Next, we introduce the admissible class of infinitesimal costs which generalizes the admissible Riemannian inner products defined in Section \ref{sec:AdmissibleMetrics}. 

%Although a standard Lagrangian cost would be defined as a function of the speed $v$ and the growth rate $\alpha$, our infinitesimal cost has nicer analytical properties when defined in terms of the momentum $\omega$ and the source $\zeta$ variables. Indeed, it is then natural to require subadditivity (i.e. we expect that ``locally'', mass is not encouraged to split) and homogeneity in $(\rho,\omega,\zeta)$ (thus convexity). Finally, this change of variables allows interesting costs where $\omega$ or $\zeta$ do not necessarily admit a density w.r.t.\ $\rho$ (see Section \ref{sec:ExamplesPartial} \todo{be more precise in the need}).

\begin{definition}[Infinitesimal cost]
\label{def: infinitesimal cost}
In this paper, an infinitesimal cost is a lower semicontinuous function $f: \Omega \times \R \times \R^d \times \R \to [0,+\infty]$ such that for all $x\in \Omega$, $f(x,\cdot,\cdot,\cdot)$ is convex, positively 1-homogeneous and satisfies
\[
f(x,\rho,\omega,\zeta) 
\begin{cases}
= 0 & \tn{if } (\omega,\zeta) = (0,0) \tn{ and } \rho \geq 0\\
\in ]0,+\infty[ & \tn{if } (\omega,\zeta) \neq (0,0) \text{ and } \rho>0 \\
= + \infty & \tn{if } \rho<0 \, .
\end{cases}
\]
\end{definition}

It is clear that any continuous, convex Lagrangian defines an infinitesimal cost and that reciprocally an infinitesimal cost $f$ defines a continuous, convex Lagrangian $L(x,v,g)= f(x,1,v,g)$ so the two approaches (starting from a Lagrangian or an infinitesimal cost) are in fact equivalent. 

\begin{example}
With the choice of Lagrangian $L(v,g)=\frac12 (|v|^2+g^2)$, the associated infinitesimal cost $f$ is the lower-semicontinuous relaxation of $\frac12 (|\omega|^2/\rho + \zeta^2/\rho)$. This case correspond to the metric $\WF$. We also study in Section~\ref{sec:Examples} Lagrangians of the form $L(v,g)=\frac1p|v|^p+|g|$ with $p>1$, which correspond to the infinitesimal cost $\frac1p |\omega|^p/\rho^{p-1}+|\zeta|$ and show the connection with optimal partial transport.
\end{example}

The dynamic formulation is defined as the minimization of an action defined from the infinitesimal cost $f$.

\begin{definition}[Dynamic problem]
\label{def:DynamicProblem}
For $(\rho,\omega,\zeta) \in \mathcal{M}([0,1] \times \Omega)^{1+d+1}$, let
\begin{align}\label{eq:Jdynamic}
	J_D(\rho,\omega,\zeta) & \eqdef \int_0^1 \int_{\Omega} f(x,\dens{\rho}{\lambda},\dens{\omega}{\lambda},\dens{\zeta}{\lambda})\, \d\lambda(t,x)
%	J_D(\mu) & \eqdef \int_0^1 \int_{\Omega} f(x,\dens{\mu}{|\mu|})\, \d|\mu|(t,x)
	\intertext{where $\lambda \in \mc{M}_+([0,1] \times \Omega)$ is such that $(\rho,\omega,\zeta) \ll \lambda$. Due to 1-homogeneity of $f$, this definition does not depend on the choice of $\lambda$. The dynamic problem is, for $\rho_0,\rho_1 \in \mathcal{M}_+(\Omega)$,}
	\label{eq:dynamic problem}
	C_D(\rho_0,\rho_1) & \eqdef \inf_{(\rho,\omega,\zeta) \in \mathcal{CE}_0^1(\rho_0,\rho_1)} J_D(\rho,\omega,\zeta)\,.
\end{align}
\end{definition}

The expression in~\eqref{eq:Jdynamic} is quite abstract due to the dummy reference measure, which allows to deal correctly with the singular parts of the measures, but it is mainly a rewritting of
~\eqref{eq:actionlagrangian} in terms of the variables $(\rho,\omega,\zeta)$. We will sometimes make the following restrictive assumptions on the infinitesimal cost $f$.

\begin{enumerate}[{(C}1{)}]
\item \emph{multiplicative dependency on $x$}: there exist continuous functions $\lambda_i : \Om \to ]0,+\infty[, \, i\in \{1,\dots , N\}$ such that 
\begin{equation}\label{continuity assumption}
f(x,\rho,\omega,\zeta) = \sum_{i=1}^N \lambda_i(x) \tilde{f}_i(\rho,\omega,\zeta)\, .
\end{equation}
\label{ass: multiplicative}
%where each $\tilde{f}_i$ satisfies an integrability condition: there exists $C_i>0$ such that $|\tilde{f}_i(\cdot)|\leq C_i \max_x f(x,\cdot)$
%
\item \emph{doubling condition}: there exists $C>0$ such that $f(x,\rho,\omega,2\zeta)\leq C \cdot f(x,\rho,\omega,\zeta)$, for all $(x,\rho,\omega,\zeta) \in \Omega \times \R \times \R^d \times \R$.
\label{ass: doubling}
 \end{enumerate}
 %
%Of course, any admissible Riemannian metric from Section 2 defines an infinitesimal cost function which satisfies conditions (C\ref{ass: multiplicative}) and (C\ref{ass: doubling}), by equation \eqref{GeneralFormOfMetric}. 
As is shown in the next result, assumption (C\ref{ass: doubling}) is enough for the dynamic cost $C_D$ to be finite between any pair of nonnegative measures.
\begin{proposition}[Finite cost]
\label{prop: finite cost dynamic}
Let $f$ be an infinitesimal cost satisfying assumption (C\ref{ass: doubling}) and such that  $f(x,1,0,1)$ is bounded on $\Omega$. Then for all $(\rho_0,\rho_1)\in \mathcal{M}_+(\Omega)^2$, the dynamic cost \eqref{eq:dynamic problem} is finite, i.e.\ $C_D(\rho_0,\rho_1)< +\infty$.
\end{proposition}
\begin{proof}
Let $C$ be the constant of assumption (C\ref{ass: doubling}) and first assume that $\rho_0=0$. Define, for some $\alpha>0$, the measures $\rho = \rho_1 \otimes (t^\alpha \d t)$, $\omega = 0$ and $\zeta = \rho_1 \otimes (\alpha t^{\alpha-1} \d t)$. By construction $(\rho,\omega,\zeta)\in \mathcal{CE}_0^1(0,\rho_1)$, it only remains to show that for $\alpha$ big enough this triplet has finite cost. By homogeneity of $f$:
\begin{align*}
J_D(\rho,\omega,\zeta) &=
\int_0^1 t^{\alpha} \d t \int_\Omega \d\rho_1 f(x,1,0,\alpha /t)\, .
\end{align*}
By convexity, $f(x,1,0,\cdot)$ increases with the module of the last argument, so with $k(t) := \lceil \log_2(\alpha/t) \rceil$ it holds 
\[
0 \leq f(x,1,0,\alpha/t) \leq f(x,1,0,2^{k(t)}) \leq C^{k(t)} f(x,1,0,1) \leq C' t^{-\log_2 C}
\]
for some $C'>0$ which does not depend on $t$. Consequently, for $\alpha >\log_2 C -1$,
\begin{align*}
C_D(0,\rho_1) \leq J_D(\rho,\omega,\zeta) \leq
C' \int_0^1 t^{\alpha-\log_2 C} \d t \int_\Omega \d\rho_1  < +\infty\, .
\end{align*}
Remark that the preceding argument works as well when reverting or shortening the time interval. Thus, if $\rho_0$ is not the null measure, one builds a interpolation by first decreasing the mass from $\rho_0$ to $0$ for $t\in [0,1/2]$ and then increasing from $0$ to $\rho_1$ for $t\in [1/2,1]$, and the cost of this interpolation is again finite.
\end{proof}

\subsection{Properties of dynamic formulations}
The dynamic problem enjoys a dual formulation,  which is the maximization of a linear objective over the set of sub-solutions to a Hamilton-Jacobi equation. Unlike the dual formulation of classical optimal transport, this Hamilton-Jacobi equation involves a term of zeroth order. We state here the proposition in a rather abstract form and explicit examples are given in Section~\ref{sec:Examples}.
%For the definition of lower semicontinuity for set valued functions, see the preliminaries before Theorem \ref{th: duality}.

% DUALITY
\begin{proposition}[Duality]
\label{prop: dynamic dual}
If $f$ is an infinitesimal cost satisfying assumption (C\ref{ass: multiplicative})  then the minimum of \eqref{eq:dynamic problem} is attained. Moreover, denoting $B(x)$ the polar set of $f(x,\cdot,\cdot,\cdot)$ for all $x\in \Omega$, it holds
\begin{equation}
\label{eq: dynamic primal problem}
C_D (\rho_0,\rho_1) = \sup_{\varphi \in K} \int_\Omega \varphi(1,\cdot) \d \rho_1 - \int_\Omega \varphi(0,\cdot) \d \rho_0
\end{equation}
with 
$
K \eqdef \left\{ \varphi \in  C^1([0,1]\times \Omega) : (\partial_t \varphi, \nabla \varphi, \varphi)\in B(x), \, \forall (t,x)\in[0,1]\times \Omega \right\} \, .
$
\end{proposition}

\begin{corollary}[Sublinearity]\label{cor:sublinearityCD}
As the supremum of continuous linear functionals, $C_D$ is a sublinear (i.e.\ convex, positively $1$-homogeneous) and weakly* lower semicontinuous functional.
\end{corollary}

\begin{proof}
Remark, that \eqref{eq: dynamic primal problem} can be written as 
\[
- \inf_{\varphi \in C^1([0,1] \times \Omega)} F(A\varphi) + G(\varphi)
\]
where
$A : \varphi \mapsto (\partial_t \varphi, \nabla \varphi, \varphi)$, is a bounded linear operator from $C^1([0,1]\times \Omega)$ to $C([0,1]\times \Omega)^{d+2}$, and
$F :(\alpha,\beta,\gamma) \mapsto \int_0^1 \int_{\Omega} \iota_{B(x)}(\alpha(t,x),\beta(t,x),\gamma(t,x)) \d x \d t $,
$ G : \varphi \mapsto \int_{\Omega} \varphi(0,\cdot)\d\rho_0 - \int_{\Omega} \varphi(1,\cdot) \d\rho_1$ are convex, proper and lower-semi\-continuous functionals, in particular because for all $x\in \Omega$, the set $B(x)$ is convex, closed and contains $0_{\R^{d+2}}$.
Since we assumed that $f(x,\rho,\omega,\zeta)>0$ if $|\omega|>0$ or $\zeta>0$ and $f$ is continuous as a function of $x$ on the compact $\Omega$, one can check that there exists $\veps>0$ such that $(-\veps,0,\theta \epsilon/2) \in \left( \tn{ int } \cap_{x\in \Omega} B(x)\right)$ for $\theta\in [-1,1]$ and thus the function $\varphi : t \mapsto -\veps t +\veps/2$ is such that $F(A\varphi) + G(\varphi)<+\infty$ and $F$ is continuous at $A\varphi$. Then, by Fenchel-Rockafellar duality, \eqref{eq: dynamic primal problem} is equal to
\[
\min_{\mu \in \mathcal{M}([0,1]\times \Omega)^{d+2}}  G^*(-A^*\mu) + F^*(\mu) \, .
\]
By Lemma~\ref{lem:dualitymeasures}, we have $F^*=J_D$, and by direct computations, $G^*\circ (-A^*)$ is the convex indicator of $\mathcal{CE}_0^1(\rho_0,\rho_1)$. 
%For a similar proof strategy with more details see also \cite[Theorem 2.1]{ChizatOTFR2015}.
\end{proof}

The lower-semicontinuity and duality results in this article rely on the following duality property of integral functionals of measures. It is a rephrasing of \cite[Theorem 6]{rockafellar1971integrals} with simplified assumptions thanks to \cite[Lem. A.2]{bouchitte1988integral}.
\begin{lemma}\label{lem:dualitymeasures}
Let $X$ be a compact metric space and $f:X\times\R^n \to \R\cup \{\infty\}$ a l.s.c.\ function such that for all $x\in X$, $f_x(\cdot)=f(x,\cdot)$ is convex, positively $1$-homogeneous and proper. Then $I_f: \mathcal{M}(X)^n\to\R\cup \{\infty\}$ and $I_{f^*}:\mathcal{C}(X)^n\to\R\cup \{\infty\}$ defined as
\[
I_{f}(\mu) \eqdef \int_X f_x(\dens{\mu}{\lambda})\d \lambda 
\quad \text{and}\quad
 I_{f^*}(\phi) \eqdef \begin{cases}
0 &\text{if $\phi(x)\in \mathrm{dom} f(x,\cdot)^*$, $\forall x \in X$},\\
\infty &\text{otherwise}
\end{cases}
\]
form a pair of convex, proper, l.s.c.\ conjugates functions, where the topology considered are the sup-norm topology for $\mathcal{C}(X)^n$ and the weak* topology for $\mathcal{M}(X)^n$. In the definition of $I_f$, $\lambda$ is any nonnegative measure that dominates $\mu$.
\end{lemma}

An important property of these dynamic unbalanced problems is that in many cases they define geodesic distances on the space of nonnegative measures.
\begin{proposition}[Metric property]\label{prop:metricproperty}
Let $f$ be an infinitesimal cost such that $f(x,1,0,1)$ is bounded on $\Omega$. If for all $x\in \Omega$, the function $L_x:(v,g)\mapsto f(x,1,v,g)$ is positively $p$-homogeneous (for some $p>1$) and symmetric with respect to the origin then $C_D^{1/p}$ is a metric and $(\mathcal{M}_+(\Omega),C_D^{1/p})$ is a geodesic metric space.
\end{proposition}
\begin{proof}
Let $\rho_0,\rho_1\in \mathcal{M}_+(\Omega)$. It is clear that $C_D(\rho_0,\rho_1)$ is finite by Proposition \ref{prop: finite cost dynamic}. The symmetry property comes from the symmetry of $L_x$ and the fact that $(\rho,\omega,\zeta)\in \CE_0^1(\rho_0,\rho_1) \Leftrightarrow (\rho,-\omega,-\zeta)\in \CE_0^1(\rho_1,\rho_0)$. It is clear that $C_D(\rho_0,\rho_0)=0$ and conversely, if $C_D(\rho_0,\rho_1)=0$, then $\omega=\zeta=0$ which implies $\rho_0=\rho_1$. Finally, the triangle inequality follows from equation~\eqref{eq:dynalternative} in Lemma~\ref{lem:constantspeed} below, which also proves that any pair of points can be joined by a constant speed minimizing geodesic.
\end{proof}

\begin{lemma}[Constant speed minimizers]\label{lem:constantspeed}
Let $f$ be an infinitesimal cost such that $f(x,1,0,1)$ is bounded on $\Omega$. If for all $x\in \Omega$, the function $L_x:(v,g)\mapsto f(x,1,v,g)$ is positively $p$-homogeneous (for some $p>1$), then minimizers $(\rho,\omega,\zeta)$ of the dynamic problem of Definition~\ref{def:DynamicProblem} can be disintegrated in time w.r.t.\ Lebesgue and satisfy $C_D(\rho_s,\rho_t)=|t-s|C_D(\rho_0,\rho_1)$ for all $0\leq s<t\leq 1$. Moreover, one has for any $T>0$
\begin{equation}\label{eq:dynalternative}
C_D(\rho_0,\rho_1)^{1/p} = \inf \left\{ \int_0^T \left(\int_{\Omega} f(x,\dens{\rho_t}{\lambda_t},\dens{\omega_t}{\lambda_t},\dens{\zeta_t}{\lambda_t})\d \lambda_t \right)^{1/p}\d t\right\}
\end{equation}
where the infimum runs over $(\rho_t,\omega_t,\zeta_t)_{t\in [0,1]}\in \CE_0^T(\rho_0,\rho_1)$ and $\lambda_t$ is a dummy reference measure that dominates $\rho_t,\omega_t$ and $\zeta_t$ for all $t\in [0,T]$.
\end{lemma}
\begin{proof}
By Proposition~ \ref{prop: finite cost dynamic}, we know that $C_D(\rho_0,\rho_1)$ is always finite. Moreover, since $L_x$ is superlinear, any feasible triplet $(\rho,\omega,\zeta)$ satisfies $\omega, \zeta \ll \rho$ from which we can deduce that $\rho$ admits a disintegration in time with respect to the Lebesgue measure on $[0,1]$ (this comes from the fact that $\zeta_t(\Omega)$ is the distributional derivative of $t\mapsto \rho_t(\Omega)$). Let us denote by $\tilde C$ the infimum in~\eqref{eq:dynalternative}, taken with $T=1$ (the fact that this value does not change with $T$ is a consequence of a simple rescaling argument). One may argue exactly as in~\cite[Thm. 5.4]{dolbeault2009new} to show the inequality $C_D\leq\tilde C$. The reverse inequality follows from H\"older inequality and is exact if and only if the integrand equals $C_D(\rho_0,\rho_1)$ $\d t$-a.e.\ for any minimizer $(\rho_t,\omega_t,\zeta_t)_{t\in [0,1]}$. This constant speed property, combined with the fact that $(\rho,\omega,\zeta)$, after time rescaling, remains minimizing between any pair of intermediate times $0\leq s<t\leq1$ (otherwise one could improve the action on $[0,1]$ by glueing), leads to the constant speed property.
\end{proof}

The main goal of this article is to formulate a Kantorovich-like formulation for problems of this kind. This aim requires to introduce a new definition of \emph{static} optimal transport problems, and this is the object of the next section. The relationship between these two classes of models is stated and proved in Section~\ref{sec:DynamicToStatic}.

% !TEX root = ../DynamicToStatic.tex

%%%%%%%%%%%%%%%%%%%%%%%%%
%STATIC PROBLEM
%%%%%%%%%%%%%%%%%%%%%%%%%
\section{Static Kantorovich Formulations}
\label{sec : general static problem}
The classical Kantorovich formulation of optimal transport, for two probability measures $\mu,\nu$ on a set $X$ and with a transport cost $c:(x,y)\mapsto \R \cup \{\infty\}$, is
\[
\inf \left\{ \int_{X^2} c(x,y) \d \gamma(x,y)\;;\; \gamma \in \mathcal{M}_+(X^2), (\Proj_0)_\# \gamma = \mu \text{ and } (\Proj_1)_\# \gamma = \nu \right\}.
\]
A nonnegative measure on $X^2$ that satisfies the marginal constraints is called a \emph{coupling} of $\mu$ and $\nu$. If now $\mu$ and $\nu$ have different total masses, two issues arise: (i) the set of couplings between $\mu$ and $\nu$ is empty and (ii) the cost needs also to describe the effort necessary to make the mass vary. We propose in this section a generalized Kantorovich problem that is valid for arbitrary nonnegative measures. %It is introduced in Definition \ref{def:extended kantorovich} and its dual formulation in Theorem \ref{th: duality}. When This Kantorovich formulation does not need the cost on the product space to be related to a distance, yet, when it is the case and under mild assumptions described below, the Kantorovich formulation defines a distance on $\mathcal{M}_+(X)$ as shown in Theorem \ref{th:metric}.l

%%%%%%%%%%%%%%%%%%%%%%%%%%%%%%%%%%%
\subsection{Definitions}
In what follows, $\Omega \subset \R^d$ is a compact set, $x$ typically refers to a point in $X$ and $m$ to a mass.  We first define a cost function, which takes as input not only two points in space but also two masses: it can be though of as the cost of matching two Dirac measures of arbitrary mass.

\begin{definition}[Cost function]
	\label{def:CostFunction}
	In the sequel, a \emph{cost function} is a function 
	\[
		c : 
		\begin{array}{lcl}
			(\Omega \times [0, + \infty[) ^2 &\to & [0,+ \infty] \\
			(x_0,m_0),(x_1,m_1) &\mapsto &c(x_0,m_0,x_1,m_1)
		\end{array}
	\]
which is l.s.c.\ in all its arguments and jointly sublinear in $(m_0, m_1)$. It is implicitly defined as $+\infty$ outside of its domain of definition.
\end{definition}

A \emph{sublinear} function is by definition a positively 1-homogeneous and subadditive function, or equivalently, a positively 1-homogeneous and convex function. The joint subadditivity of $c$ in $(m_0,m_1)$ guarantees that it is always better to send mass from one point to another in one single chunk. This sublinearity requirement can be interpreted as the generalization of the fact that, in classical optimal transport, the cost of moving mass is linear with the mass.

In order to allow for variations of mass, we need to adapt the constraint set of standard optimal transport by introducing the notion of \emph{semi-couplings}. These are relaxed couplings with only one marginal being fixed. 

\begin{definition}[Semi-couplings]
For two marginals $\rho_0, \rho_1 \in \mathcal{M}_+(\Omega)$, the set of semi-couplings is
	\begin{align}
		\Gamma(\rho_0,\rho_1) \eqdef \left\{
			(\gamma_0,\gamma_1) \in \big(\mathcal{M}_+(\Omega^2)\big)^2 \colon
			\pfwd{(\Proj_0)} \gamma_0 = \rho_0,\, \pfwd{(\Proj_1)} \gamma_1 = \rho_1
			\right\}.
	\end{align}
\end{definition}

Informally, $\gamma_0(x,y)$ represents the amount of mass that is taken from $\rho_0$ at point $x$ and is then transported to an (possibly different, to account for creation/destruction) amount of mass $\gamma_1(x,y)$ at point $y$ of $\rho_1$. These semi-couplings allow us to formulate a novel static Kantorovich formulation of unbalanced optimal transport as follows. 

\begin{definition}[Unbalanced Kantorovich problem]
	\label{def:extended kantorovich}
	For a cost function $c$ we introduce the functional
	\begin{align}
		J_K(\gamma_0,\gamma_1) \eqdef \int_{\Omega^2} c \left( x,\dens{ \gamma_0}{ \gamma},y,\dens{ \gamma_1}{ \gamma}\right) \d \gamma(x,y)\, ,
	\end{align}
where $\gamma \in \mathcal{M}_+(\Omega^2)$ is any measure such that $\gamma_0, \gamma_1 \ll \gamma$. This functional is well-defined since $c$ is jointly 1-homogeneous w.r.t.\ the mass variables (see Definition \ref{def:CostFunction}).
	The corresponding optimization problem is
	\begin{align}
	\label{eq: static problem}
	%\tag{Q_{\mathcal{M}}}
		C_K(\rho_0,\rho_1)  \eqdef \inf_{(\gamma_0,\gamma_1) \in \Gamma(\rho_0,\rho_1)}
			J_K(\gamma_0,\gamma_1)\,.
	\end{align}
\end{definition}

\begin{proposition}
	\label{prop:KMinimizers}
	If $c$ is a \emph{cost function} then a minimizer for $C_K(\rho_0,\rho_1)$ exists.
\end{proposition}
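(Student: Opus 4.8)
The plan is to use the direct method of the calculus of variations. First I would record that the admissible set $\Gamma(\rho_0,\rho_1)$ is nonempty: for arbitrary probability measures $\beta,\beta'$ on $\Omega$, the product pair $\gamma_0 = \rho_0 \otimes \beta$ and $\gamma_1 = \beta' \otimes \rho_1$ satisfies $\pfwd{(\Proj_0)}\gamma_0 = \rho_0$ and $\pfwd{(\Proj_1)}\gamma_1 = \rho_1$, so the infimum is taken over a nonempty set. I would then fix a minimizing sequence $(\gamma_0^n,\gamma_1^n)_n \in \Gamma(\rho_0,\rho_1)$ with $J_K(\gamma_0^n,\gamma_1^n) \to C_K(\rho_0,\rho_1)$.

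The compactness step rests on the observation that the marginal constraints pin down the total masses: since pushforward preserves total mass, $\gamma_0^n(\Omega^2) = \rho_0(\Omega)$ and $\gamma_1^n(\Omega^2) = \rho_1(\Omega)$ for every $n$. As $\Omega^2$ is compact, both sequences are bounded in total variation, hence by the Banach--Alaoglu theorem (weak-$*$ sequential compactness of bounded subsets of $\mathcal{M}_+(\Omega^2)$) a subsequence converges weak-$*$ to some $\gamma_0^*,\gamma_1^* \in \mathcal{M}_+(\Omega^2)$. Admissibility of the limit is routine: the projections $\Proj_0,\Proj_1$ are continuous, so the pushforward maps $\gamma \mapsto \pfwd{(\Proj_i)}\gamma$ are weak-$*$ continuous, giving $\pfwd{(\Proj_0)}\gamma_0^* = \rho_0$ and $\pfwd{(\Proj_1)}\gamma_1^* = \rho_1$, i.e.\ $(\gamma_0^*,\gamma_1^*) \in \Gamma(\rho_0,\rho_1)$.

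The crux — and the step I expect to be the main obstacle — is the weak-$*$ lower semicontinuity of $J_K$. Here I would bundle the pair into a single $\R^2$-valued measure $\vec\gamma \eqdef (\gamma_0,\gamma_1)$ on $\Omega^2$ and introduce the integrand $\tilde c\big((x,y),(m_0,m_1)\big) \eqdef c(x,m_0,y,m_1)$, extended by $+\infty$ outside the nonnegative quadrant in the mass variables. By Definition~\ref{def:CostFunction}, $\tilde c$ is jointly l.s.c.\ in all its arguments and, in the mass variables, convex and positively $1$-homogeneous; the $1$-homogeneity is precisely what makes $J_K(\vec\gamma) = \int_{\Omega^2} \tilde c\big((x,y),\dens{\d\vec\gamma}{\d|\vec\gamma|}\big)\,\d|\vec\gamma|$ well defined and independent of the dominating measure. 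The desired lower semicontinuity then follows from a Reshetnyak-type lower semicontinuity theorem for integral functionals of measures with convex, positively $1$-homogeneous integrands, applied to $\vec\gamma^n \rightharpoonup \vec\gamma^*$.

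This yields $J_K(\gamma_0^*,\gamma_1^*) \le \liminf_n J_K(\gamma_0^n,\gamma_1^n) = C_K(\rho_0,\rho_1)$, and since $(\gamma_0^*,\gamma_1^*)$ is admissible it is a minimizer. The technical care lies entirely in verifying the hypotheses of the lower semicontinuity theorem — the joint lower semicontinuity of $\tilde c$ and the convex, positively $1$-homogeneous extension of $c(x,\cdot,y,\cdot)$ to the full mass space — rather than in the compactness and closedness steps, which are standard once the mass bounds from the marginal constraints are in hand.
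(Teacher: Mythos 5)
Your proposal is correct and follows essentially the same route as the paper: weak-$*$ compactness of $\Gamma(\rho_0,\rho_1)$ from the mass bounds imposed by the marginal constraints, closedness under the weak-$*$ continuous pushforwards, and lower semicontinuity of $J_K$ via a Reshetnyak-type theorem for convex, positively $1$-homogeneous integrands (the paper cites \cite[Theorem 2.38]{ambrosio2000functions}, which is stated on open sets, so it first extends $c$ by $+\infty$ to an open neighborhood $\hat{\Omega}^2 \supset \Omega^2$ --- the one small technicality your sketch leaves implicit).
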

\begin{proof}
By Lemma~\ref{lem:dualitymeasures}, $J_K$ is weakly* l.s.c.\ on $\mathcal{M}(\Omega^2)$. Since $\Omega$ is compact and the marginals $\rho_0,\rho_1$ have finite mass, $\Gamma(\rho_0,\rho_1)$ is tight and thus weakly* pre-compact. It is also closed so $\Gamma(\rho_0,\rho_1)$ is weakly* compact and any minimizing sequence admits a cluster point which is a minimizer (the minimum is not assumed to be finite).
\end{proof}

\begin{example}
\label{ex : example cost static}
	Standard optimal transport problems with a nonnegative, l.s.c.\ cost $\tilde{c}$ are retrieved as particular cases, by taking 
	\begin{align*}
		c(x_0,m_0,x_1,m_1) & = 
		\begin{cases}
			m_0 \cdot \tilde{c}(x,y)& \text{if } m_0 = m_1\,, \\
			+ \infty & \text{otherwise.}
		\end{cases}
	\end{align*}
	\end{example}

This example shows that, unlike the dynamic problems of the previous section, these \emph{semi-coupling} formulations form a proper generalization of the Kantorovich problems. In particular, the properties that we prove next (duality, metric) can be particularized to recover well-known properties of classical optimal transport. Other examples, in particular the $\WF$-metric, are studied in more detail in Section \ref{sec:Examples}.

%\begin{example}
%	The interpolation between optimal transport and the Fisher-Rao metric is obtained \todo{at this stage, isn't it to early to state this? Otherwise, explain what ``obtained'' means}  by choosing the cost function \eqref{eq:GeneralizedCost:OTFR}.In this case, the optimization problem~\eqref{eq: static problem} is a conic program involving quadratic cones, and if the measures $(\rho_0,\rho_1)$ are finite sums of Diracs, it can be solved using interior point solvers~\cite{nesterov1994interior}. 
%\end{example}

%\begin{example}
%One can recover the ``flat metric'' (a metric between total variation and $W_1$, see the definition of $W^{1,1}_1$ in \cite{piccoli2013properties}), by taking
%	\begin{align*}
%		c(x_0,m_0,x_1,m_1) & = 
%		\begin{cases}
%			|x_0-x_1|\min (m_0,m_1) + |m_1-m_0|& \text{if $|x_0-x_1|<2$}\,, \\
%			m_0 + m_1 & \text{otherwise.}
%		\end{cases}
%	\end{align*}
%\end{example}
%
%More examples will be studied with more details in the last section of this article.

%%%%%%%%%%%%%%%%%%%%%%%%%
% PROPERTIES
%%%%%%%%%%%%%%%%%%%%%%%%%
\subsection{Properties of semi-coupling problems}

%%%%%%% METRIC PROPERTY
A central property of optimal transport is that it can be used to lift a metric from the base space $\Omega$ to the space of probability measures over $\Omega$ (cf.\ \cite[Chapter 6]{villani2009oldnew}). We now show that this extends to the unbalanced framework. We first introduce the space $\Cone(\Omega)$, a standard construction in topology which, in our context, can be understood as the space of Dirac measures of arbitrary mass (endowed with the weak* topology).

\begin{definition}[Cone]
The space $\Cone(\Omega)$ is defined as the space $\Omega \times \R_+$ where all the points with zero mass $\Omega \times \{0\}$ are identified to one point. It is endowed with the quotient topology (note that subtleties appear in the non-compact setting, see~\cite{LieroMielkeSavareLong}).
\end{definition}
\begin{theorem}[Metric]
\label{th:metric}
Let $c$ be a cost function such that, for some $p \in [1, + \infty[$ 
\begin{align}
	\label{eq:CMetric}
	(x_0,m_0), (x_1,m_1) \mapsto c(x_0,m_0,x_1,m_1)^{1/p}
\end{align}
is a metric on  $\Cone(\Omega)$. Then $C_K^{1/p}$ defines a metric on $\mathcal{M}_+(\Omega)$.
\end{theorem}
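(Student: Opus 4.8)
The statement asks us to transfer the metric structure on $\Cone(\Omega)$ (via $c^{1/p}$) up to $\mathcal{M}_+(\Omega)$ (via $C_K^{1/p}$). The plan is to verify the three metric axioms---symmetry, the identity of indiscernibles, and the triangle inequality---for $C_K(\cdot,\cdot)^{1/p}$. Symmetry is essentially immediate: the role of $(\gamma_0,\gamma_1)$ and the two marginals can be swapped, and since $c(x_0,m_0,x_1,m_1)=c(x_1,m_1,x_0,m_0)$ by the symmetry of the metric on the cone, swapping marginals sends $\Gamma(\rho_0,\rho_1)$ bijectively to $\Gamma(\rho_1,\rho_0)$ with equal cost, giving $C_K(\rho_0,\rho_1)=C_K(\rho_1,\rho_0)$. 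For the identity of indiscernibles, the direction $\rho_0=\rho_1 \Rightarrow C_K=0$ follows by choosing the ``diagonal'' semi-coupling concentrated on $\{x=y\}$ with $m_0=m_1$, on which $c^{1/p}$ vanishes because it is a metric with $c((x,m),(x,m))=0$. The converse---$C_K(\rho_0,\rho_1)=0 \Rightarrow \rho_0=\rho_1$---will require that $c^{1/p}((x_0,m_0),(x_1,m_1))=0$ forces $(x_0,m_0)=(x_1,m_1)$ (the separation property of the pseudo-metric, when it is a genuine metric), from which one argues that the optimal semi-couplings must be supported on the diagonal with equal masses, hence $\rho_0=\rho_1$.

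The heart of the proof is the triangle inequality, and this is where the main obstacle lies. Given $\rho_0,\rho_1,\rho_2$ with optimal (or near-optimal) semi-couplings $(\gamma_0,\gamma_1)\in\Gamma(\rho_0,\rho_1)$ and $(\eta_1,\eta_2)\in\Gamma(\rho_1,\rho_2)$, I would construct a competitor semi-coupling between $\rho_0$ and $\rho_2$ by a gluing argument. The natural tool is a disintegration/gluing lemma: since $\gamma_1$ and $\eta_1$ both have a marginal related to $\rho_1$ on the shared middle factor, I would build a measure $\sigma$ on $\Omega^3$ whose appropriate projections recover the given semi-couplings, then project onto the first and third factors $\Omega_0\times\Omega_2$ to obtain a candidate pair. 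The subtlety, compared to classical optimal transport, is that here the ``middle'' marginals of $\gamma_1$ (the second semi-coupling of the first pair) and of $\eta_1$ (the first semi-coupling of the second pair) need not be the fixed marginal $\rho_1$---only $\pfwd{(\Proj_1)}\gamma_1=\rho_1$ and $\pfwd{(\Proj_0)}\eta_1=\rho_1$ are fixed, while $\gamma_0,\eta_2$ carry the other masses. The gluing must therefore be performed with respect to a common reference measure on the middle fiber, disintegrating both pairs over $\rho_1$ (or over a common dominating measure) so that the mass densities can be composed consistently.

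Once the glued three-plane measure $\sigma\in\mathcal{M}_+(\Omega^3)$ is in hand, with densities $a_0=\dens{\gamma_0}{\cdot}$, a middle density $b$, and $a_2=\dens{\eta_2}{\cdot}$ expressed against a common base measure $\mu$ on $\Omega^3$, I would define the candidate semi-couplings between $\rho_0$ and $\rho_2$ by pushing forward $a_0\,\mu$ and $a_2\,\mu$ under $\Proj_{0,2}$. The cost of this competitor is then estimated fiberwise: on each point $(x_0,x_1,x_2)$ of $\Omega^3$, the triangle inequality for the metric $c^{1/p}$ on the cone gives
\begin{equation}
c\big(x_0,a_0,x_2,a_2\big)^{1/p} \le c\big(x_0,a_0,x_1,b\big)^{1/p} + c\big(x_1,b,x_2,a_2\big)^{1/p}.
\end{equation}
Integrating against $\mu$ and invoking the Minkowski inequality in $L^p(\mu)$ (this is exactly where the exponent $1/p$ and the $p\in[1,+\infty[$ hypothesis are used) yields
\begin{equation}
C_K(\rho_0,\rho_2)^{1/p} \le J_K(\gamma_0,\gamma_1)^{1/p} + J_K(\eta_1,\eta_2)^{1/p},
\end{equation}
and taking infima over the two families of semi-couplings completes the inequality. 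The delicate point to handle carefully is that pushing forward under $\Proj_{0,2}$ may merge mass---the homogeneity and convexity (subadditivity) of $c$ in the mass variables, guaranteed by Definition~\ref{def:CostFunction}, is precisely what ensures this merging can only decrease (never increase) the cost, so the competitor's true $J_K$-value is bounded by the fiberwise integral above. I would make this rigorous by appealing to the $1$-homogeneity to reduce $J_K$ to an integral against any dominating $\gamma$, and to subadditivity to control the pushforward.
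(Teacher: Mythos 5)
Your plan reproduces the paper's proof almost step for step: symmetry and separation are handled identically, and the triangle inequality is obtained by disintegrating the two optimal semi-coupling pairs over a common dominating measure on the middle factor, gluing to measures on $\Omega^3$, applying the cone triangle inequality fiberwise, then Minkowski's inequality in $L^p$, and finally the joint convexity and $1$-homogeneity of $c$ in the mass variables to show that projecting under $\Proj_{02}$ (which merges mass) can only decrease the cost. The architecture and every key lemma you invoke are the ones the paper uses.

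There is, however, one step where the construction as you describe it would fail: the fibers of the middle variable $y$ on which mass is created or destroyed. You compose the two legs by forming the product of the disintegrations normalized by the density of $\rho_1$ at $y$. But only $\pfwd{(\Proj_1)}\gamma_1=\rho_1$ and $\pfwd{(\Proj_0)}\eta_1=\rho_1$ are constrained; the second marginal of $\gamma_0$ (the plan for the mass leaving $\rho_0$) and the first marginal of $\eta_2$ (the plan for the mass arriving at $\rho_2$) may well charge the set where the density of $\rho_1$ vanishes --- precisely where mass is destroyed upon arrival at $y$, or created at $y$ before departing. On those fibers the normalized product is $0/0$, and setting it to zero loses that mass: the projection of the glued measure onto the first factor is then strictly smaller than $\rho_0$ (respectively, onto the third factor smaller than $\rho_2$), so the competitor is not in $\Gamma(\rho_0,\rho_2)$ and the estimate collapses. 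The paper repairs this with an explicit case distinction: where the density of $\rho_1$ vanishes, the outgoing disintegration of $\gamma_0$ is tensorized with $\delta_y$ on the third factor (and symmetrically the incoming disintegration of $\eta_2$ with $\delta_y$ on the first factor), while the intermediate measure $\hat{\gamma}$ is set to zero there. Since the cone identifies all zero-mass points, these diagonal pieces contribute exactly the cost $c(x,m,y,0)$ to the first leg and cost zero to the second, and the marginal constraints are restored. Apart from this necessary device --- and from spelling out that, by $1$-homogeneity of $c$ along the product fibers, the two three-factor integrals collapse exactly to $C_K(\rho_0,\rho_1)$ and $C_K(\rho_1,\rho_2)$ --- your argument is the paper's.
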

%Remember that the space $\Cone(\Omega)$ is defined as the space $\Omega \times [0,+\infty[$ where all the points with zero mass $\Omega \times \{0\}$ collapse to one point (see Definition~\ref{def:Cone})\todo{define the cone}.
\begin{remark}
Compatibility with the cone structure implies in particular that for all $x_2\in \Omega$, $m\geq0$, 
$c(x_1, 0, x_2, m)$ must be independent of $x_1\in \Omega$ and $c(x_1,0,x_2,0)=0$.
\end{remark}
\begin{remark}
One can replace the word ``metric'' by ``extended metric'' (i.e.\ allowing the value $+\infty$) and the proof goes through. By considering the cost in Example \ref{ex : example cost static}, a corollary is that $C_K^{1/p}$ defines a proper metric on each equivalence class for the relation $\mu \sim \nu \Leftrightarrow \mu(\Omega) = \nu(\Omega)$. The metric property of the Wasserstein distance is thus recovered as a particular case.
\end{remark}
%%%%%%%%%%%%%%%

\begin{proof}
First, symmetry and nonnegativity are inherited from $c$. Moreover, 
\begin{multline*}
[ C_K(\gamma_0,\gamma_1) = 0 ]
\Leftrightarrow \\
[ \exists (\gamma_0, \gamma_1) \in \Gamma(\rho_0, \rho_1) : (\gamma_0= \gamma_1) \tn{ and }  (x=y \tn{ $\gamma_0$-a.e.}) ] \\
\Leftrightarrow
[ \rho_0 = \rho_1 ].
\end{multline*}
%And thus $C_K(\cdot,\cdot)^{1/p}=0$ if and only if $\rho_0=\rho_1$. 

\newcommand{\mudens}[1]{{\textstyle{\frac{#1}{\mu}}}}
\newcommand{\mudisint}[1]{{(#1|\mu)}}
It remains to show the triangle inequality. Fix $\rho_0$, $\rho_1$, $\rho_2 \in \mathcal{M}_+(\Omega)$. Take two pairs of minimizers for \eqref{eq: static problem} 
	\begin{align*}
		(\gamma_0^{01},\gamma_1^{01}) & \in \Gamma(\rho_0,\rho_1)\,, &
		(\gamma_0^{12},\gamma_1^{12}) & \in \Gamma(\rho_1,\rho_2)\,, 		
	\end{align*}
and let $\mu \in \mathcal{M}_+(\Omega)$ be such that $\pfwd{(\Proj_1)} (\gamma_0^{01},\gamma_1^{01}) \ll \mu$ and $\pfwd{(\Proj_0)} (\gamma_0^{12},\gamma_1^{12}) \ll \mu $. Denote by $\mudisint{\gamma_i^{01}}(x|y)$ the disintegration of $\gamma_i^{01}$ along the second factor w.r.t.\ $\mu$. More precisely, for all $y\in \Omega$, $\mudisint{\gamma_i^{01}}(\cdot|y)\in \mathcal{M}_+(\Omega)$ and it holds, for all $f$ measurable on $\Omega^2$, 
\begin{align*}
	\int_{\Omega^2} f\,\d\gamma_i^{01} & = \int_{\Omega} \left( \int_{\Omega} f(x,y)\,\d\mudisint{\gamma_i^{01}}(x|y) \right)\,\d\mu(y)
\end{align*}
and analogously for $\mudisint{\gamma_i^{12}}(z|y)$ along the first factor for $i=0,1$. Write $\mudens{\rho_1}(y)$ for the density of $\rho_1$ w.r.t.\ $\mu$.
%
%By an abuse of notation, we use the same letters for denoting the measures and their density w.r.t.\ $\mu$. 
We combine the optimal semi-couplings in a suitable way to define $\gamma_0$, $\gamma_1$, $\hat{\gamma} \in \mc{M}(\Omega^3)$ (via disintegration w.r.t.\ $\mu$ along the second factor):
\begin{align*}
\mudisint{\gamma_0}(x,z|y) &\eqdef
\begin{cases}
\frac{\mudisint{\gamma_0^{01}}(x|y) \otimes \mudisint{\gamma_0^{12}}(z|y)}{\mudens{\rho_1}(y)} & \tn{ if } \mudens{\rho_1}(y) >0, \\
\mudisint{\gamma_0^{01}}(x|y) \otimes \delta_{y}(z) & \tn{ otherwise, }
\end{cases} \\
\mudisint{\gamma_1}(x,z|y) &\eqdef
\begin{cases}
\frac{\mudisint{\gamma_1^{01}}(x|y) \otimes \mudisint{\gamma_1^{12}}(z|y)}{\mudens{\rho_1}(y)} & \tn{ if } \mudens{\rho_1}(y) >0, \\
\delta_{y}(x) \otimes \mudisint{\gamma_1^{12}}(z|y) & \tn{ otherwise, }
\end{cases} \\
\mudisint{\hat{\gamma}}(x,z|y) &\eqdef  
\begin{cases}
\frac{\mudisint{\gamma_1^{01}}(x|y) \otimes \mudisint{\gamma_0^{12}}(z|y)}{\mudens{\rho_1}(y)} & \tn{ if } \mudens{\rho_1}(y) >0, \\
0 & \tn{ otherwise. }
\end{cases}
\end{align*}
The interpretation of $\gamma_0$ is that all mass that leaves $x$ towards $y$, according to $\gamma_0^{01}(x,y)$, is distributed over the third factor according to $\gamma_0^{12}(y,z)$. In case the mass disappears at $y$, it is simply ``dropped'' as $\delta_y$ on the third factor. Then $\gamma_1$ is built analogously for the incoming masses and $\hat{\gamma}$ is a combination of incoming and outgoing masses.
For $i=0,1$ let $\gamma_i^{02} \eqdef \pfwd{(\Proj_{02})} \gamma_i$ and note that, by construction, $(\gamma_0^{02},\gamma_1^{02}) \in \Gamma(\rho_0, \rho_2)$.
In the rest of the proof, for an improved readability, when writing the functional the dummy measure $\gamma$ such that $\gamma_0, \gamma_1 \ll \gamma$ is considered as implicit and we write
	\begin{equation*}
	\label{remark : abuse of notation}
		\int_{\Omega^2} c\big(x,\gamma_0(x,y),y,\gamma_1(x,y)\big)\,\d x\,\d y
		\quad \text{for}\quad \int_{\Omega^2} c \left(x,\dens{ \gamma_0}{ \gamma},y,\dens{ \gamma_1}{ \gamma}\right) \d \gamma(x,y)\, .
	\end{equation*}
With this notation, one has
\begin{align*}
& \int_{\Omega^3} c\big(x,\gamma_0(x,y,z),y,\hat{\gamma}(x,y,z)\big) \d x\,\d y\,\d z \\
= {} & \int_{\rho(y)>0} \left( \int_{\Omega^2}
	c\big(x,\mudisint{\gamma_0^{01}}(x|y), y, \mudisint{\gamma_1^{01}}(x|y)\big)
	\frac{\mudisint{\gamma_0^{12}}(y|z)}{\mudens{\rho}(y)} \d x\,\d z \right) \d \mu(y) \\
& + \int_{\rho(y)=0} \left( \int_{\Omega^2}
	c\big(x,\mudisint{\gamma_0^{01}}(x|y),y,0\big) \delta_{y}(z) \d x\,\d z \right) \d \mu(y) \\
= {} &  \int_{\Omega} \left( \int_{\Omega}
	c\big(x,\mudisint{\gamma_0^{01}}(x|y), y, \mudisint{\gamma_1^{01}}(x|y)\big)
	\d x \right) \d\mu(y) \\
= {} & J_K(\gamma_0^{01},\gamma_1^{01}) = C_K(\rho_0,\rho_1)\, ,
\end{align*}
and analogously
\begin{align*}
	\int_{\Omega^3} c\big(y,\hat{\gamma}(x,y,z),z,\gamma_1(x,y,z)\big) \d x\,\d y\,\d z & =
	C_K(\rho_1,\rho_2)\,.
\end{align*}
One finally obtains
	\begin{align*}
		 C_K(\rho_0,\rho_2)^{\frac{1}{p}} 
		 \leq {} & \Big(\int_{\Omega^2} c\big(x,\gamma_0^{02}(x,z),z,\gamma_1^{02}(x,z) \big) \d x\,\d z \Big)^{\frac{1}{p}} \\
		\overset{(1)}{\leq} {} & \Big( \int_{\Omega^3} c\big( x, \gamma_0(x,y,z),z, \gamma_1(x,y,z) \big) \d x\,\d y\,\d z \Big)^{\frac{1}{p}} \\
		\overset{(2)}{\leq} {} & 
			\Big( \int_{\Omega^3} 
			\Big[ 
			c\big( x, \gamma_0(x,y,z), y, \hat{\gamma}(x,y,z) \big)^{\frac{1}{p}} + \\
			&\qquad\qquad c\big( y, \hat{\gamma}(x,y,z), z, \gamma_1(x,y,z) \big)^{\frac{1}{p}}
			\Big]^p \d x\,\d y\,\d z 
			\Big)^{\frac{1}{p}} \\
		\overset{(3}{\leq} {} & 
			\Big( \int_{\Omega^3} c\big( x, \gamma_0(x,y,z),y, \hat{\gamma}(x,y,z)\big) \d x\,\d y\,\d z \Big)^{\frac{1}{p}} + \\
			&\qquad\qquad  \Big( \int_{\Omega^3} c\big( y, \hat{\gamma}(x,y,z), z,\gamma_1(x,y,z) \big) \d x\,\d y\,\d z\Big)^{\frac{1}{p}} \\
		\overset{(4)}{=} {} & C_K\left(\rho_0, \rho_1\right)^{\frac{1}{p}} + 
			C_K\left(\rho_1, \rho_2\right)^{\frac{1}{p}} 
		\end{align*}
	where we used (1) the convexity of $c$, (2) the fact that $c^{1/p}$ satisfies the triangle inequality, (3) Minkowski's inequality and (4) comes from the computations above. Thus $C_K(\cdot,\cdot)^{1/p}$ satisfies the triangle inequality, and is a metric. 
\end{proof}

Let us now give a dual formulation of the problem. It is similar to the dual formulation of the Kantorovich problem where one maximizes $\int \phi \d \rho_0 + \int \psi \d \rho_1$ over the set of pairs of continuous functions $(\phi,\psi)$ on $\Omega$ that satisfy $\phi(x)+\psi(y) \leq c(x,y)$. Here the objective is the same, but the constraint set is generally not a linear constraint set and is modified as follows.

\begin{definition}[Dual contraint set]\label{def:dualconstraint}
The dual constraint set $Q:\Omega^2 \to 2^{\R\times\R}$ is a set valued function defined, for all $(x,y)\in \Omega^2$ as the polar set to the function $c(x,\cdot,y\cdot)$, i.e.\ the domain of its convex conjugate.
\end{definition}

Remember that since the cost function $c$ is jointly 1-homogeneous, convex, and l.s.c.\ in the variables $(m_0,m_1)$, for all $(x,y)\in \Omega^2$, the Legendre transform of $c(x,\cdot,y,\cdot)$ is the indicator of a closed convex set. Moreover, as $c$ is nonnegative, and worth $+\infty$ if $m_0$ or $m_1$ is negative, the latter contains the negative orthant. We now state the dual problem (note that we do not attempt to prove the existence of dual maximizers).

%For nonempty convex valued multifunctions, $Q$ is said to be \emph{lower semicontinuous} if $G=\{(x,t) : x \in \tn{int } Q(t)\}$ is open (see \cite[Lemma 2]{rockafellar1971integrals}).

\begin{proposition}[Duality]
\label{prop: duality}
Let $c$ be a \emph{cost function}, $Q$ the associated dual constraint set from Definition~\ref{def:dualconstraint} and let
\[ 
B =  \left\{ (\phi, \psi) \in C(\Omega)^2 :  \forall (x,y) \in \Omega^2, (\phi(x),\psi(y)) \in Q(x,y) \right\}\,.
\]
Then
\[
C_K(\rho_0,\rho_1) =
 \sup_{(\phi, \psi) \in B} \int_{\Omega} \phi(x) \d \rho_0 + \int_{\Omega} \psi(y) \d \rho_1 
 %= \min_{(\gamma_0, \gamma_1)\in \Gamma(\rho_0, \rho_1)} J_K(\gamma_0, \gamma_1)  \, .
\]
\end{proposition}

\begin{corollary}[Sublinearity]\label{cor:sublinearityCK}
As the supremum of continuous linear functionals, $C_K$ is a sublinear (i.e.\ convex, positively $1$-homogeneous) and weakly* lower semicontinuous functional on $\mathcal{M}(\Omega)^2$.
\end{corollary}

\begin{proof}
Let us rewrite  the supremum problem as
\[
\sup_{(u_0,u_1) \in (C(\Omega^2)^2} - F(u_0,u_1) - G(u_0,u_1)
\]
where
\begin{align*}
G &: (u_0,u_1) \mapsto 
\begin{cases}
- \int_{\Omega}  \phi(x) \d \rho_0 -  \int_{\Omega} \psi(y) \d \rho_1  & \tn{if } u_0(x,y)=\phi(x) \\
		& \tn{and } u_1(x,y) = \psi(y) \\
+ \infty &\tn{otherwise,}
\end{cases} 
\end{align*}
and $F$ is the indicator of $\{ (u_0,u_1)\in (C(\Omega^2)^2) : (u_0,u_1)(x,y) \in Q(x,y), \, \forall (x,y)\in \Omega^2 \}$.
Note that $F$ and $G$ are convex and proper. Also, given our assumptions, there is a pair of functions $(u_0,u_1)$ at which $F$ is continuous (for the $\sup$ norm topology) and $F$ and $G$ are finite since for all $(x,y)\in \Omega^2$, $Q(x,y)$ contains the negative orthant. Then Fenchel-Rockafellar duality theorem (see, e.g.\ \cite[Theorem 1.9]{cedric2003topics}) states that
 \begin{equation}
 \label{eq: Fenchel-Rockafellar}
 \sup_{(u_0,u_1) \in C(\Omega^2)^2} \!\!\!\!\!\! - F(u_0,u_1) - G(u_0,u_1) 
 = 
 \!\!\!\!\!\!\min_{(\gamma_0, \gamma_1) \in \mathcal{M}(\Omega^2)^2} \!\!\!\!\! \left\{ F^*(\gamma_0,\gamma_1) + G^*(-\gamma_0,-\gamma_1) \right\} .
 \end{equation}
Let us compute the Legendre transforms. For $G$, we obtain
\begin{align*}
G^*(-\gamma_0,-\gamma_1) 
&= \!\!\!\!\!\! \sup_{(\phi,\psi)\in C(\Omega)^2} \!\!\!\! - \int_{\Omega^2} \phi(x)\d \gamma_0 - \int_{\Omega^2} \psi(x)\d \gamma_1 + \int_{\Omega} \phi(x) \d \rho_0 + \int_{\Omega} \psi(y) \d \rho_1\\
&= 
\begin{cases}
0 & \tn{ if } (\gamma_0, \gamma_1)\in \Gamma^{+/-}_{\rho_0, \rho_1}\\
+ \infty & \tn{ otherwise.}
\end{cases}
\end{align*}
where $\Gamma^{+/-}_{\rho_0, \rho_1} $ is the set of semi-couplings without the non-negativity constraint. On the other hand, by Lemma~\ref{lem:dualitymeasures},
\[
F^*(\gamma_0,\gamma_1) = 
\int_{\Omega^2} c\left(x, \dens{ \gamma_0}{ \gamma},y, \dens{ \gamma_1}{ \gamma} \right) \d\gamma(x,y) 
\]
where $\gamma$ is any measure in $\mathcal{M}_+(\Omega^2)$ with respect to which $(\gamma_0,\gamma_1)$ is absolutely continuous. %Note that this duality result requires $Q$ to be lower-semicontinuous in a sense which is equivalent to the lower-semicontinuity of $c$, thanks to \cite[Lemma A.2]{bouchitte1988integral}. 
Finally, as $F^*$ includes the nonnegativity constraint, the right hand side of \eqref{eq: Fenchel-Rockafellar} is equal to $\min_{(\gamma_0,\gamma_1)\in \Gamma(\rho_0,\rho_1)} J_K(\gamma_0,\gamma_1)$.
\end{proof}

\section{From Dynamic to Static Problems}
\label{sec:DynamicToStatic}
In the previous sections, we have introduced and studied basic properties of two categories of unbalanced optimal transport problems: dynamic formulations and semi-coupling formulations. In this section, we prove (Theorem~\ref{th: continuous static general}) that any dynamic problem is equivalent to a semi-coupling problem. We put ourselves back in the setting of Section~\ref{sec:dynamic} where $\Omega$ is the closure of an open, connected, bounded subset of $\R^d$ with Lipschitz boundary.

\subsection{Minimal path cost and convexification}

\iffalse
\begin{definition}
The Dirac-based cost is
\begin{align}
	\label{eq:c between Diracs}
	c_d : (x_0, m_0) , (x_1, m_1) \mapsto C_D(m_0 \delta_{x_0}, m_1 \delta_{x_1}) \, .
\end{align}
If $c_d$ is lower-semicontinuous, then it defines a \emph{cost function}. 
\end{definition}
\fi

Any infinitesimal cost $f$ defines a cost function on $\Omega \times \R_+$ obtained by minimizing over absolutely continuous trajectories.

\begin{definition}[minimal path cost]
The minimal path cost associated to the Lagrangian $L$ is, for $(x_i,m_i)\in \Omega\times\R_+$ with $i\in \{0,1\}$,
\begin{equation}\label{eq-pathspace}
c_f(x_0,m_0,x_1,m_1) \eqdef \inf_{ x(t),m(t)}  \int_0^1 f(x(t),m(t),m(t)\, x'(t),m'(t)) \, \d t\,
\end{equation}
where the infimum runs over absolutely continuous paths $t\mapsto(x(t),m(t))$ that link $(x_0,m_0)$ to $(x_1,m_1)$.
%for $(x(t),m(t)) \in C^1([0,1],\Omega \times [0,+\infty[)$ such that $(x(i),m(i)) = (x_i,m_i)$ for $i\in \{ 0,1\}$.
\end{definition}

In general, $c_f$ does not define a cost function in the sense of Definition \ref{def:CostFunction} because of a possible lack of convexity, but its convex regularization does so. This convex relaxation can be nicely expressed as the minimal path cost obtained by allowing mass to be split in two ``travelling'' Dirac masses.

\begin{proposition}[properties and convexification]\label{prop : alternative dynamic/static}
Assume that properties C\ref{ass: multiplicative} and C\ref{ass: doubling} hold for the infinitesimal cost $f$. Then the minimal path cost $c_f$ is finite and continuous on $(\Omega\times\R_+)^2$. Morevoer, its convex regularization $\tilde c_f$ is an admissible cost function, is continuous, and characterized by
\begin{equation}\label{eq:c one or two diracs}
\tilde c_f(x_0,m_0,x_1,m_1 ) = \min_{\substack{m_0^a+m_0^b=m_0\\m_1^a+m_1^b=m_1}} c_f(x_0,m_0^a,x_1,m_1^a)+ c_f(x_0,m_0^b,x_1,m_1^b)\, .
\end{equation}
\end{proposition}
\begin{proof}
The argument of the proof of Proposition~\ref{prop: finite cost dynamic} shows that $c_f$ is finite on its domain. It is also continuous since one can perturb a path and reach nearby points with a perturbed cost (thanks to assumption C\ref{ass: multiplicative}). As for $\tilde c_f$, it is clear that it takes values in $[0,\infty]$ and that it inherits $1$-homogeneity from $f$, so $\tilde c_f(x,\cdot,y\cdot)$ is sublinear for all $(x,y)$. Moreover, characterization is a consequence of Carath\'eodory's Theorem \cite[Corollary 17.1.6]{rockafellar2015convex} (one can also check directly that is it valid for $(m_0,m_1)=(0,0)$ although this is not part of the cited result). The infimum is attained because $c_f$ is continuous and the minimization set is compact. Finally, the continuity of $\tilde c_f$ is clear from this characterization and the continuity of $c_f$.
\end{proof}
%
\iffalse
The next proposition gives a sufficient condition for $c_s$ to be finite when $\Omega$ is convex (for simplicity).
%
\begin{proposition}
If $\Omega \subset \R^d$ is convex and $f$ is an infinitesimal cost which satisfies assumptions (C\ref{ass: multiplicative}) and (C\ref{ass: doubling}) , then for all $(x_0,x_1)\in \Omega^2$ and $(m_0,m_1)\in [0,+\infty[^2$, $c_s(x_0,m_0, x_1,m_1)$ is finite.
\end{proposition}
\begin{proof}
If $m_0=0$ (or similarly if $m_1=0$), then the smooth trajectory $(x_1,t^\alpha m_1)$ has a finite cost for $\alpha>0$ big enough (see the proof of Prop. \ref{prop: finite cost dynamic}). 
%
If $m_0>0$ and $m_1>0$ then the smooth interpolation $(x(t),m(t))=(x_0 + t v, m_0\cdot e^{\alpha t})$ with $v= x_1-x_0$ and $\alpha = \log(m_1/m_0)$  satisfies $(x(i),m(i))=(x_i,m_i)$ for $i\in \{0,1\}$ and by homogeneity,
\[
\int_0^1 f(x(t),m(t),m(t)x'(t),m'(t))\d t = \int_0^1 f(x(t),1,v,\alpha) m(t) \d t < +\infty\, .
\]
\end{proof}
\fi

%This convexification is a crucial step and it is $\tilde c_f$ that plays the main role in the 

%Note that in general, $\tilde c_f\neq c_f$, since, as opposed to the dynamic problem $C_D$, the problem defining $c_s$ is not allowed to split mass. For instance, we will see in Section \ref{sec:equivalence WF} that the cut distance is $\pi/2$ for the $\WF$ metric while it is $\pi$ for the associated minimal path cost.

\subsection{Main Theorem}

We can now state our main theorem, which connects the dynamic and the semi-coupling formulations of unbalanced optimal transport.

\begin{theorem}
\label{th: continuous static general}
Let $\Omega \subset \R^d$ be a compact which is star shaped w.r.t.\ a set of points with nonempty interior, and $f$ be an infinitesimal cost satisfying (C\ref{ass: multiplicative})  and (C\ref{ass: doubling}). Defining $\tilde c_f$ the convex relaxation of the minimal path cost (as in Proposition~\ref{prop : alternative dynamic/static}) as the static cost, it holds, for $\rho_0, \rho_1 \in \mathcal{M}_+(\Omega)$, 
\[
C_D(\rho_0, \rho_1) = C_K(\rho_0,\rho_1).
\]
\end{theorem}
%
%\begin{theorem}\label{th: continuous static general}
%Let $\Omega \subset \R^d$ be a compact which is star shaped w.r.t.\ a set of points with nonempty interior, and $c$ be a cost function satisfying $c_d \leq c \leq c_s$, where $c_d$ and $c_s$ are derived from an infinitesimal cost $f$ satisfying (C\ref{ass: multiplicative})  and (C\ref{ass: doubling}). If the associated static problem $C_K$ is weakly* continuous,\todo{useless} it holds, for $\rho_0, \rho_1 \in \mathcal{M}_+(\Omega)$, $C_D(\rho_0, \rho_1) = C_K(\rho_0,\rho_1)\,$ and $c = c_d$.
%\end{theorem}
%
The assumption on the domain includes convex sets but also most star-shaped sets. This particular assumption comes from the fact that our proof strategy involves a smoothing step that needs to deal correctly with the dependency in $x$ of the infinitesimal cost $f$.

\begin{proof}[Proof of Theorem \ref{th: continuous static general}]
%It is clear that $c_d\leq c_s$ because for any candidate path $(x(t),m(t))$ in \eqref{eq-pathspace}, $m(t)\delta_{x(t)} \otimes \d t \in \mathcal{CE}_0^1(m_0 \delta_{x_0}, m_1 \delta_{x_1})$, and thus the assumption $c_d \leq c \leq c_s$ is not void. 
The proof is divided into three steps: in Step 1, we show, using discrete approximations of semi-couplings, that it holds $C_K\geq C_D$. By integrating characteristics (an argument similar to the original proof of the Benamou-Brenier formula \cite{benamou2000computational}), we show in Step 2 that for absolutely continuous marginals, $C_K$ is upper bounded by the dynamic minimization problem restricted to smooth fields. In Step 3, a regularization argument, inspired by \cite[Theorem 8.1]{cedric2003topics}, extends this result to general measures and shows that $C_K\leq C_D$. %The conclusion, in Step 4, follows by a density argument. %It can be checked that $\tilde c_f$ satisfies the assumptions of Theorem \ref{th : continuity continuous static} which implies that %For clarity, we write $c$ for $\tilde c_f$ in the proof.

\paragraph{Step 1.}
Let $(\rho_0,\rho_1)\in \mathcal{M}_+(\Omega)^2$ and %let $(f \gamma,g \gamma)$ be an optimal pair of semi-couplings for $C_{h}(\mu,\nu)$, where $ \gamma \in \P(\Omega^2)$ and $f,g \in L^1( \gamma)$.  
let $(\hat \gamma_0^{(n)})$, let $(\hat \gamma_1^{(n)})$ be the atomic measures on $\Omega^2$ given by Lemma \ref{lem:approximationsemicoupling} below (the continuity of $h$ follows from Proposition \ref{prop : alternative dynamic/static}): they are such that  $\lim_{n\to \infty} J_K(\hat \gamma_0^{(n)},\hat \gamma_1^{(n)}) = C_K(\rho_0,\rho_1)$. Also denote by $(\rho_0^n,\rho_1^n)\eqdef((\Proj_0)_\# \bar \gamma^{(n)}_0,\allowbreak (\Proj_1)_\# \bar \gamma^{(n)}_1)$ the corresponding marginals, that satisfy $\rho_0^n\rightharpoonup^* \rho_0$ and $\rho_1^n\rightharpoonup \rho_1$. 

From the definition of $c_f$ and by characterization \eqref{eq:c one or two diracs} of $\tilde c_f$ one has that $\tilde c_f(x_0,m_0,x_1,m_1)\geq C_D(m_0\delta_{x_0},m_1\delta_{x_1})$. It follows, taking the notations of the proof of Lemma \ref{lem:approximationsemicoupling},
\begin{align*}
J_K(\hat \gamma_0^{(n)},\hat \gamma_1^{(n)})
&= \sum_{i\in I} \tilde c_f(x_i, f^{(n)}_i,y_i,g^{(n)}_i)\gamma_i^{(n)}\\
&\geq \sum_{i\in I} C_D(f^{(n)}_i  \gamma_i^{(n)} \delta_{x_i}, g^{(n)}_i  \gamma_i^{(n)} \delta_{y_i}) \geq C_D(\rho_0^n,\rho_1^n).
\end{align*}
Since by Corollary~\ref{cor:sublinearityCD}, $C_D$ is weakly l.s.c., it follows $C_D(\mu,\nu)\leq C_K(\mu,\nu)$.

\iffalse 
For the static problem $C_K$, there exists a minimizer in the set $\Gamma(\rho_0,\rho_1)\cap \mathcal{M}_+^{at}(\Omega^2)^2$. Indeed, an atom assigned to another atom is represented by an atom in $\Om^2$, and the same can be forced to hold for an atom assigned to the apex, because for all $x\in \Omega$, $\tilde c_f(x,1,\cdot,0)$ attains its minimum in $\Omega$. Let $(\gamma_0,\gamma_1)$ be such a minimizer, it can be written $\gamma_k = \sum_{i,j} m^k_{i,j} \delta_{(x_i,x_j)}$, for $k=0,1$. From the definition of $c_f$ and by characterization \eqref{eq:c one or two diracs} of $\tilde c_f$ one has that $\tilde c_f(x_0,m_0,x_1,m_1)\geq C_D(m_0\delta_{x_0},m_1\delta_{x_1})$. As a consequence, it holds
\begin{align*}
C_K(\rho_0,\rho_1) 
&= \sum_{i,j} \tilde c_f(x_i,m^0_{i,j},x_j,m^1_{i,j})\\
&\geq \sum_{i,j} C_D ( m^0_{i,j} \delta_{x_i} , m^1_{i,j} \delta_{x_j})\\
& \geq C_D \big(\sum_{i,j} m^0_{i,j} \delta_{x_i} , \sum_{i,j} m^1_{i,j} \delta_{x_i}\big) = C_D(\rho_0,\rho_1)
\end{align*}
where for the last inequality we used the fact that $C_D$ is subbadditive: it is indeed defined as the infimum of the subbaditive functional $J_D$ (which inherits subadditivity from the infinitesimal cost $f$) under linear constraints.
\fi

\paragraph{Step 2.} Let $\rho_0,\, \rho_1 \in  \mathcal{M}_+(\Omega)$ be absolutely continuous measures with positive mass and let
\[
(\rho,\omega,\zeta) \in \left\{ (\rho,\omega,\zeta)\in \mathcal{CE}_0^1(\rho_0,\rho_1) : \dens{\omega}{\rho}, \dens{\zeta}{\rho}\in C^1([0,1]\times \Omega) \right\} \, .
\]
%and define $C_D^{sm} \eqdef \inf_{\mathcal{CE}_{sm}(\rho_0,\rho_1)} J_D(\rho,\omega, \zeta)$.
%
%
One can take the Lagrangian coordinates $(\varphi_t(x),\lambda_t(x))$ which are given by the flow of the smooth fields $(v\eqdef \omega/\rho, g \eqdef \zeta/\rho)$ (see \cite[Prop. 3.6]{maniglia2007probabilistic}) :
\begin{align*}
\partial_t \varphi_t(x) = v_t(\varphi_t(x)) \quad \tn{and} \quad
\partial_t  \lambda_t(x) = g_t(\varphi_t(x))\lambda_t(x) \, ,
\end{align*}
with the initial condition $(\varphi_0(x), \lambda_0(x)) = (x,1)$. The Neumann boundary conditions on $v$, included in the continuity equation, imply that for all $t\in [0,1]$, $\Omega$ is stable by $\varphi_t$. Recall that $(\varphi_t(x),\lambda_t(x))$ describes the position and the relative increase of mass at time $t$ of a particle initially at position $x$ and that one has $\rho_t = (\varphi_t)_\# (\lambda_t \cdot \rho_0)$.
It follows
\begin{align*}
J_D(\rho,\omega, \zeta) 
&= \int_{[0,1] \times \Omega} f\big(x,1,v_t(x),g_t(x)\big) \d[(\varphi_t)_{*} \big(\lambda_t \rho_0 \big)](x) \d t \\
&\overset{(1)}{=} \int_{\Omega} \left[ \int_0^1 f\big(\varphi_t(x),1, \partial_t \varphi_t(x), \partial_t \lambda_t(x)/\lambda_t(x) \big) \lambda_t(x) \d t \right] \d \rho_0(x) \\
&\overset{(2)}{=}  \int_{\Omega} \left[ \int_0^1 f\big(\varphi_t(x),\lambda_t(x), \lambda_t(x) (\partial_t \varphi_t(x)),\partial_t \lambda_t(x) \big) \d t \right] d \rho_0(x) \\
&\overset{(3)}{\geq} \int_{\Omega} c_f(x,1, \varphi_1(x), \lambda_1(x)) d \rho_0(x) \\
&\overset{(4)}{\geq}  C_K(\rho_0,\rho_1)
\end{align*}
where we used (1) the change of variables formula (2) homogeneity of $f$ (3) the definition of the minimal path cost $c_f$ and (4) 
the fact that  $\big( \pfwd{(\id, \varphi_1)} \rho_0, \allowbreak \pfwd{(\id, \varphi_1)} (\lambda_1 \rho_0)\big)\in \Gamma(\rho_0,\rho_1)$ and $c_f\geq \tilde c_f$.
%\end{proof}
%
%\begin{lemma}
%\label{lemma : smoothing argument}
\paragraph{Step 3.} 
Let $\rho_0, \rho_1 \in \mathcal{M}_+(\Omega)$. We want to show, with the help of Step 2, that $C_K(\rho_0, \rho_1) \leq C_D(\rho_0, \rho_1)$.
%\end{lemma}
%\begin{proof}
Let $(\rho,\omega,\zeta) \in \mathcal{CE}_0^1(\rho_0,\rho_1)$ and for $\delta \in ]0,1[$ let
\[
\tilde{\rho}^{\delta} = (1-\delta) \rho + \delta\, (\d x \otimes \d t)\vert_{S}, \quad \tilde{\omega}^{\delta} = (1-\delta) \omega, \quad \tilde{\zeta}^{\delta} = (1-\delta) \zeta
\]
where $S\supset \Omega$ is a bounded set containing $\Omega+B^d(0,1)$ and $B^d(a,r)$ denotes the open ball of radius $r$ centered at point $a$ in $\R^d$. One has $(\tilde{\rho}^{\delta} ,  \tilde{\omega}^{\delta}, \tilde{\zeta}^{\delta} )\vert_\Omega \in \mathcal{CE}_0^1(\tilde{\rho}_0^{\delta}\vert_\Omega,\tilde{\rho}_1^{\delta}\vert_\Omega)$ and by convexity, 
\[
J_D(\tilde{\rho}^{\delta} ,  \tilde{\omega}^{\delta}, \tilde{\zeta}^{\delta} ) \leq J_D(\rho, \omega, \zeta)\, .
\]
Since $\tilde{\rho}_0^{\delta}\vert_\Omega \rightharpoonup^*\rho_0$ and $\tilde{\rho}_1^{\delta}\vert_\Omega \rightharpoonup^*\rho_1$ as $\delta \to 0$ and $C_K$ is weakly* lower semicontinuous (Corollary~\ref{cor:sublinearityCK}), it is sufficient to prove
$
J_D(\tilde{\rho}^{\delta} ,  \tilde{\omega}^{\delta}, \tilde{\zeta}^{\delta} ) \geq C_K(\tilde{\rho}_0^{\delta}\vert_{\Omega} ,  \tilde{\rho}_1^{\delta}\vert_\Omega ) 
$
for proving $J_D(\rho, \omega, \zeta) \geq C_K(\rho_0,\rho_1)$.
In order to alleviate notations we shall now denote $\tilde{\rho}^{\delta} ,  \tilde{\omega}^{\delta}, \tilde{\zeta}^{\delta} $ by just $\rho, \omega, \zeta$ and the new marginals $\tilde{\rho}_0^{\delta},  \tilde{\rho}_1^{\delta}$ by $\rho_0,\rho_1$ (now supported on $S\supset \Omega$).
Up to a translation, we can assume that $0$ is in the interior of the set of points w.r.t.\ which $\Omega$ is star shaped. Then \cite[Theorem 5.3]{rubinov2013abstract} tells us that the Minkowski gauge $x \mapsto \inf \{ \lambda>0 : x \in \lambda \Omega \}$ is Lipschitz: let us denote by $k\in \R_+^*$ its Lipschitz constant.
We introduce the regularizing kernel
$
r_{\veps}(t,x) = \frac{1}{\veps^{d}}\, r_1\left( \frac{x}{\veps} \right) \frac{1}{\veps}\, r_2 \left( \frac{t}{\veps} \right) \, 
$
where $r_1 \in C_c^{\infty}(B^d(0,\frac{1}{2k}))$, $r_2 \in C_c^{\infty}(B^1(0,\frac{1}{2k}))$, $r_i \geq 0$, $\int r_i = 1$, $r_i$ even ($i=1,2$). We want to perform a smoothing procedure which, in some sense, preserves the domain $\Omega$ because this is where $f$ and $\tilde c_f$ are defined.

Let $\bar{\mu}=((1+\veps)^{-1}\bar{\rho},(1+\veps)^{-1}\omega,\zeta)$ where $\bar{\rho}$ is a measure on $[-\veps,1+\veps]\times S$ which is worth $\rho$ on $[0,1]\times S$,  $\rho_0 \otimes \d t$ on $[-\veps,0[\times S$ and $\rho_1 \otimes \d t$ on $]1,1+\veps]\times S$ (while $\omega, \zeta$ are always implicitely extended by $0$). 
%By the extension property in Proposition \ref{prop:results on continuity equation}, $(\bar{\rho},\omega,\zeta)$ still satisfies the continuity equation.
Then define
\[
\mu^\veps \eqdef T_{\#} (\bar{\mu} \ast r_\veps)\vert_{[0,1]\times \Omega},
\]
where $T:(t,x) \mapsto ((1+\veps)^{-1}(t+ \veps/2),(1+\veps)^{-1}x)$ is built in such a way that the image of the time segment $[-\veps/2,1+\veps/2]$ is $[0,1]$. Furthermore, since the Minkowski gauge of $\Omega$ is $k$-Lipschitz, the image of $\Omega^\veps \eqdef \Omega + B^d(0,\veps/k)$ by $T$ is included in $\Omega$. Now, by the smoothing and scaling properties in Proposition \ref{prop:results on continuity equation}, it holds $\mu^{\veps} \in \mathcal{CE}_0^1(\rho^{\veps}_0,\rho^{\veps}_1)$ for some smoothed marginals $(\rho_0^\veps, \rho_1^\veps)$.
%  in particular because we took care of multiplying $\rho$ and $\omega$ by a factor $(1+\veps)^{-1}$ in the definition of $\bar{\mu}$. 

Notice that $\rho^{\veps}_0 \rightharpoonup^* \rho_0\vert_\Omega$ and $\rho^{\veps}_1 \rightharpoonup^* \rho_1\vert_\Omega$ when $\veps \to 0$ since these measures are the evaluations of $\bar{\mu}\ast r_\veps$ at time $-\veps/2$ and $1+\veps/2$, respectively (also contracted in space by a factor $1+\veps$ and restricted to $\Omega$). Moreover, the vector fields $\omega_t^{\veps}/ \rho_t^{\veps}$ and $\zeta_t^{\veps}/ \rho_t^{\veps}$ are well-defined, smooth, bounded functions on $[0,1]\times \Omega$ because $\rho^\veps$ has a density bounded from below by a positive constant whenever $\veps/k <1$ (so that $\Omega^\veps\subset S$). Therefore, by Step 2,
\[
J_D(\mu^{\veps}) \geq C_K(\rho_0^{\veps}, \rho_1^{\veps}).
\]
On the other hand, for any $\veps'>0$, one has
\begin{align*}
J_D(\mu^{\veps}) 
& =  \int_0^1 \int_{\Omega} f(y, \dens{\mu^{\veps}}{|\mu^{\veps}|}) \d |\mu^{\veps}|(s,y)  \\
& \overset{(1)}{=}  \int_{-\veps/2}^{1+\veps/2} \int_{\Omega^\veps} f((1-\veps)y, \dens{\bar{\mu} \ast r_{\veps}}{|\bar{\mu} \ast r_{\veps}|}) \d |\bar{\mu}\ast r_{\veps}|(s,y) \\
& \overset{(2)}{\leq}  \int_{-\veps/2}^{1+\veps/2} \int_{\Omega^\veps} \d |\bar{\mu}|(t,x) \int_{\R^{1+d}} \!\!\! \d s \d y f((1-\veps)y, \dens{\bar{\mu}}{|\bar{\mu}|}(t,x)) \cdot r_{\veps}(s-t,y-x)  \\
&  \overset{(3)}{=}  \sum_i \int_{0}^{1} \int_{\Omega} \d |\bar{\mu}|(t,x) \int_{\R^{1+d}} \!\!\! \d s \d y \tilde{f}_i(\dens{\bar{\mu}}{|\bar{\mu}|}(t,x))   \lambda_i((1-\veps)y) r_{\veps}(s-t,y-x)  \\
& \overset{(4)}{\leq} \int_0^{1} \int_{\Omega} (1+\veps')f(x, \dens{\bar{\mu}}{|\bar{\mu}|})  \d |\bar{\mu}|(t,x) 
\end{align*}
where were used (1) the change of variable formula, (2) the convexity and homogeneity of $f$, (3) assumption (C\ref{ass: multiplicative})  (which also garanties the integrability of each $\tilde{f}_i$) assumed on $f$ and (4) the continuity of the strictly positive factors $(\lambda_i)_i$, if for a given $\veps'>0$, one choses $\veps$ small enough. Therefore $ J_D(\bar{\mu}) \geq J_D(\mu^\veps)(1+\veps')^{-1}$. But by convexity and homogeneity, $J_D(\bar{\mu})\leq (1+\veps)^{-1} ((1-\veps)J_D(\rho,\omega,\zeta) + \veps J_D(\rho,\omega,2\zeta))$. By the ``doubling'' assumption (C\ref{ass: doubling})  on $f$, the term $J_D(\rho,\omega,2\zeta)$ is finite if $J_D(\rho,\omega,\zeta)$ is finite and one has
\[
C_K(\rho_0^{\veps}, \rho_1^{\veps}) \leq \frac{1+\veps'}{1+\veps}((1-\veps)J_D(\rho,\omega,\zeta) + \veps J_D(\rho,\omega,2\zeta))\, .
\]
Letting $\veps'$ and $\veps$ go to $0$, using the lower semicontinuity of $C_K$ and taking the infimum, one recovers in the end $C_K(\rho_0\vert_\Omega,\rho_1\vert_\Omega) \leq J_D(\rho,\omega,\zeta)$ as desired.
%\end{proof}
%\paragraph{Step 4.}
%We have proven in Step 1 that if  $\rho_0, \rho_1 \in \mathcal{M}_+^{at}(\Omega)$, then
%\[C_D(\rho_0, \rho_1) \leq C_K (\rho_0, \rho_1)\, .\]
%Moreover, by Step 3, for all $\rho_0, \rho_1 \in \mathcal{M}_+(\Omega)$ on has
%\[C_D (\rho_0, \rho_1)\geq C_K(\rho_0, \rho_1) \, .\]
%and thus $C_D=C_K$ for atomic measures.  But $C_D$ is weakly* l.s.c since $J_D$ is lower semicontinuous by Lemma \ref{lem:dualitymeasures}. Finally, by density of $\mathcal{M}_+^{at}(\Omega)$ in $\mathcal{M}_+(\Omega)$, for $\rho_0, \rho_1 \in \mathcal{M}_+(\Om)$, $C_D(\rho_0, \rho_1)\leq C_K(\rho_0, \rho_1)$. %The equality $c=c_d$ is direct by computing $C_K$ between Dirac measures, and because $c$ is subadditive.
\end{proof}

\begin{lemma}[Atomic approximation of semi-couplings]\label{lem:approximationsemicoupling}
Let $\mu\in \mathcal{M}_+(\Omega)$ and $\nu\in \mathcal{M}_+(\Omega)$. If $c$ is a continuous cost function, then there exists a sequence of finitely atomic measures $(\gamma_0^{(n)})$ and $(\gamma_1^{(n)})$ that weakly converge to an optimal pair of semi-couplings for $C_K(\mu,\nu)$ and such that 
\[
\lim_{n\to \infty} J_K(\gamma_0^{(n)},\gamma_1^{(n)}) = C_K(\mu,\nu).
\]
\end{lemma}
\begin{proof}
Let $(f \gamma,g \gamma)$ be an optimal pair of semi-couplings for $C_K(\mu,\nu)$, where $ \gamma \in \M_+(\Omega^2)$ and $f,g \in L^1( \gamma)$. Let $(B^{(n)}_i, (x_i,y_i)^{(n)})_{i\in I}$ be sequence of finite pointed partitions of $\Omega^2$ such that $\lim_{n\to \infty}\max_{i\in I} \diam \, B^{(n)}_i=0$.  We define the discrete approximations $\bar \gamma_0^{(n)}\eqdef T^{(n)}_\# (f\gamma)$ and $\bar \gamma_1^{(n)}\eqdef T^{(n)}_\# (g\gamma)$ where $T^{(n)}:\Omega^2\to \Omega^2$ maps all points in $B_i^{(n)}$ to $(x_i,y_i)^{(n)}$.
Also, denote $(\Proj_0)_\# \bar \gamma^{(n)}_0 =\mu_n$ and $(\Proj_1)_\# \bar \gamma^{(n)}_1=\nu_n$.
 It is clear that the discretized semi-couplings weakly converge to $(f\gamma,g\gamma)$. Moreover, for $\epsilon>0$, there exists $n\in \N$ such that for all $i\in I$, by Jensen inequality, and since $c$ is continuous, uniformly on $(X\times [0,1])\times (Y \times [0,1])$,
\begin{multline*}
c(x_i^{(n)},\bar \gamma_0^{(n)}(B_i^{(n)}),y_i^{(n)}, \bar \gamma_1^{(n)}(B_i^{(n)})) \leq \epsilon \max\{\bar \gamma_0^{(n)}(B_i^{(n)}),\bar \gamma_1^{(n)}(B_i^{(n)})\} \\ 
+ \int_{B^{(n)}_i} c(x,f(x,y),y,g(x,y))\d \gamma(x,y)
\end{multline*}
%\[h_{(x_i,y_i)^{(n)}}(f^{(n)}_i,g^{(n)}_i)\gamma_i^{(n)} \leq \eps \gamma_i^{(n)}+ \int_{B^{(n)}_i} h_{x,y}(f\gamma,g\gamma) \]
By integrating on the whole domain, one has
\[
J_K(\hat \gamma_0^{(n)},\hat \gamma_1^{(n)}) \leq  \epsilon\cdot (\mu(X)+\nu(Y)) + C_h(\mu,\nu)
\]
and the result follows because $\epsilon$ can be arbitrarily small.
\end{proof}

% !TEX root = ../DynamicToStatic.tex

%%%%%%%%%%%%%%%%%%%%%%%%%
% Examples
%%%%%%%%%%%%%%%%%%%%%%%%%
\section[Examples]{Examples}\label{sec:Examples}

In this section we discuss some examples which fit into the framework developed in sections \ref{sec : general static problem} and \ref{sec:DynamicToStatic}. Optimal partial transport is first treated and then the motivating example: the Wasserstein-Fisher-Rao metric. In this section, $\Omega$ is a convex compact set in $\R^d$ with nonempty interior.

\subsection{Optimal Partial Transport}
\label{sec:ExamplesPartial}

In our first example, we consider infinitesimal costs of the form
\begin{align}
\label{eq:POT dynamic cost}
f : (\rho,\omega,\zeta) \mapsto 
\begin{cases}
\frac1p \frac{|\omega|^p}{\rho^{p-1}} + \delta |\zeta| & \tn{ if } \rho>0 \\
\delta |\zeta|  & \tn{ if } \rho = |\omega| =0 \\
+ \infty & \tn{ otherwise .}
\end{cases}
\end{align}
which satisfies the conditions of Definition \ref{def: infinitesimal cost} and Assumptions (C\ref{ass: multiplicative}-\ref{ass: doubling}). These assumptions also allow for a continuous dependency of $\delta$ in $x$ although we do not consider it here for simplicity.
We first compute the minimal path cost associated to this infinitesimal cost.
\begin{proposition}[Minimal path cost]
Let $(x_0,m_0)$ and $(x_1,m_1)$ be points in $\Omega \times \R_+$. The minimal path cost associated to the infinitesimal cost \eqref{eq:POT dynamic cost} is
%The static and dynamic costs defined in \eqref{eq:POT static cost} and \eqref{eq:POT dynamic cost} are related by
\begin{equation}\label{eq: stat/dyn TV}
	c_f (x_0,m_0,x_1,m_1) =
	\min \left( \frac{|x_1-x_0|^p}{p} ,2\delta\right) \cdot \min (m_0,m_1) + \delta |m_1-m_0|.
	%\inf_{ (x(t),m(t))}  \int_0^1 f(m(t),m(t) x'(t),m'(t)) \, \d t\,
\end{equation}
It is already sublinear in $(m_0,m_1)$ for all $(x_0,x_1)\in \Omega^2$.
%for $(x(\cdot),m(\cdot)) \in C^1([0,1],\Omega \times [0,+\infty[)$, $x(i) = x_i$ and $m(i)=m_i$, for $i= 0,1$.
\end{proposition}

\begin{proof}
First, for any absolutely continuous path $(x,m)$ we denote by $\bar{m} = \min_{t\in [0,1]} m(t)$ its minimum mass. It holds
\begin{align*}
\int_0^1 f(m(t),m(t)x'(t),m'(t)) \d t &= \int_0^1 \frac1p |x'(t)|^p m(t) \d t + \delta \int_0^1 |m'(t)| \d t \\
&\geq \bar{m}  \int_0^1 \frac1p |x'(t)|^p\d t + \delta (|m_0-\bar{m} | + |m_1-\bar{m}| ) \\
& \geq c(x_0,m_0,x_1,m_1) \, .
\end{align*}
For the opposite inequality, let us build a minimizing sequence. In the case where $|x_0-x_1|^p/p \leq 2\delta$, we divide the time interval into three segments $[0,\epsilon], [\epsilon, 1-\epsilon]$ and $[1-\epsilon, 1]$ and build an absolutely continuous trajectory by making pure variations of mass (or staying on place) in the first and third segments, and constant speed transport of the mass $\bar{m} = \min(m_0,m_1)$ during the second segment. This way, we obtain that the right-hand side of \eqref{eq: stat/dyn TV} is upper bounded by
\[
|m_1-m_0| + \lim_{\veps \to 0}  \int_\veps^{1-\veps} \frac1p |x'(t)|^p\bar{m} \d t = c(x_0,m_0,x_1,m_1)\, .
\]
In the case $|x_0-x_1|^p/p \geq 2\delta$, one obtains the same inequality by building a similar path, but transporting only an amount $\veps$ of mass in the second segment. 
\end{proof}

From this explicit minimal path cost, we obtain a semi-coupling formulation for $C_D$. We explain below how it is related to the problem of \emph{optimal partial transport}.

%\todo{define subcouplings in a definition}

\begin{theorem}[Recovering optimal partial transport]\label{th:dynamic2partial}
For $(\rho_0,\rho_1)\in \mathcal{M}_+(\Omega)$, and considering the dynamic unbalanced optimal transport cost $C_D$ associated to the infinitesimal cost $f$ from~\eqref{eq:POT dynamic cost}, one has
\begin{equation}\label{partial OT Lagrangian}
C_D(\rho_0,\rho_1) = \delta(\rho_0(\Omega)+\rho_1(\Omega)) +  \inf_{\gamma \in \Gamma_{\leq}(\rho_0,\rho_1)}\int_{\Omega^2} \left( |x_1-x_0|^p/p - 2\delta \right) \d \gamma
\end{equation}
where the set of \emph{subcouplings} $\Gamma_{\leq}(\rho_0,\rho_1)$ is the subset of $\mathcal{M}_+(\Omega^2)$ such that the first and second marginals are upper bounded by $\rho_0$ and $\rho_1$, respectively.
\end{theorem}
\begin{proof}
Just for this proof, let us denote by $\tilde C$ the right hand side term. By Theorem~\ref{th: continuous static general}, $C_D$ admits a semi-coupling formulation involving the sublinear cost $\tilde c_f$ from equation~\eqref{eq: stat/dyn TV}. Let us consider optimal semi-couplings $(f_0\gamma,f_1\gamma)$ where $\gamma \in \mathcal{M}_+(\Omega^2)$ and $f_0,f_1 \in L^1(\gamma)$. Let $\bar \gamma = (f_0\wedge f_1)\bar \gamma|_{\mathcal D}$ where $\mathcal{D}=\{(x,y)\in \Omega^2\; ;\; |y-x|^p/p \leq 2\delta \}$. It holds  $\bar \gamma \in \Gamma_\leq (\rho_0,\rho_1)$ and 
\begin{align*}
C_K(\rho_0,\rho_1) &= \int_{\Omega^2} c(x_0,f_0(x_0,x_1)),x_1,f_1(x_0,x_1)) \d \gamma(x_0,x_1) \\
&= \int_{\Omega^2} \frac1p |x_1-x_0|^p \d \bar{\gamma} + \delta  |\gamma_0-\bar{\gamma}|(\Omega^2) + \delta|\gamma_1-\bar{\gamma}|(\Omega^2) \\
&= \delta \rho_0(\Omega)+ \delta \rho_1(\Omega) + \int_{\Omega^2} \left(\frac1p |x_1-x_0|^p -2\delta \right) \d \bar{\gamma} \geq \tilde C(\rho_0,\rho_1).
\end{align*}
For the opposite inequality, remark that the infimum defining $\tilde C$ is unchanged by adding the constraint that the sub-coupling $\gamma \in \Gamma_\leq(\rho_0,\rho_1)$ is concentrated on the set $\mathcal D$. For such a plan, let $\mu_i = \rho_i -(\Proj_i)_\#\gamma \in \mathcal{M}(\Omega)$ for $i\in \{0,1\}$ and define the pair of semi couplings
\[
\gamma_i = \gamma + \diag_\# (\mu_0\wedge \mu_1) + \diag_\# (\mu_i- \mu_0 \wedge \mu_1), \, i \in \{0,1\},
\]
where $\diag : x \mapsto (x,x)$ maps $\Omega$ to the diagonal in $\Omega^2$.  One has
\[
J_K(\gamma_0,\gamma_1) = \delta \rho_0(\Omega)+ \delta |\rho_1|(\Omega) + \int_{\Omega^2} \left(\frac1p |x_1-x_0|^p -2\delta \right) \d \gamma \, ,
\]
and it follows $C_D(\rho_0,\rho_1) \leq \tilde C(\rho_0,\rho_1)$.
\end{proof}

The minimization problem over \emph{subcouplings} introduced in Theorem~\ref{th:dynamic2partial} is a formulation of the \emph{optimal partial transport} problem, a variant of optimal transport studied in~\cite{caffarelli2010free, figalli2010optimal}. It is proved in~\cite{caffarelli2010free} that for any choice of $\delta>0$ corresponds the choice of $m(\delta)$ such that $0\leq m(\delta)\leq\min\{\rho_0(\Omega),\rho_1(\Omega)\}$ for which the minimizers of~\eqref{partial OT Lagrangian} and the minimizers of
\[
\min \left\{ \int_{\Omega^2} |y-x|^p\d \gamma(x,y) \; ;\; \gamma \in \Gamma_\leq(\rho_0,\rho_1), \gamma(\Omega^2)=m(\delta) \right\}
\]
are the same. The variable $\delta$ is the Lagrange multiplier for the constraint of total mass and corresponds to the maximum distance over which transport can occur (this does not mean, however, that optimal plans for~\eqref{partial OT Lagrangian} are obtained by simply restricting classical optimal transport plans to a set of bounded distance from the diagonal). The function $m(\delta)$ cannot be inverted in general (think of atomic measures) but it is proved in~\cite[Cor. 2.11]{caffarelli2010free} that it can be inverted if $\rho_0$ or $\rho_1$ is absolutely continuous.

Let us finally show that $C_D$, and thus optimal partial transport, admits a formulation as an optimal transport problem with relaxed marginal constraints.  This show a close link between the problems treated in this section and the problems considered in~\cite{piccoli2013properties, piccoli2014generalized} (although definitions differ slightly).

\begin{proposition}
For $\rho_0,\rho_1 \in \mathcal{M}_+(\Omega)$, one has
\begin{multline}\label{eq:WTV}
C_D(\rho_0,\rho_1) = \inf_{\gamma \in \mathcal{M}_+(\Omega^2)} \int_{\Omega^2} \frac1p |y-x|^p\d \gamma(x,y) \\ + \delta |\rho_0-(\Proj_0)_\#\gamma|(\Omega)+\delta |\rho_1-(\Proj_1)_\#\gamma|(\Omega)
\end{multline}
\end{proposition}
\begin{proof}
In this proof, we denote by $\tilde C(\rho_0,\rho_1)$ the infimum on the right-hand side. Given Theorem~\ref{th:dynamic2partial}, it is sufficient to show that the value of the infimum is unchanged when adding the constraint $(\Proj_i)_\# \gamma \leq \rho_i$ for $i \in \{0,1\}$. To prove this, consider $\gamma \in \mathcal{M}_+(\Omega^2)$ and build $\bar \gamma$ such that $\bar \gamma \leq \gamma$ and $(\Proj_0)_\# \bar \gamma = \rho_0 \wedge (\Proj_0)_\# \gamma$. By construction, one has 
 \begin{align*}
 |\rho_0-(\Proj_0)_\# \gamma| - |\rho_0-(\Proj_0)_\# \gamma^*| &= |(\Proj_0)_\# \gamma-(\Proj_0)_\# \gamma^*| \\ &= |\gamma-\gamma^*| \\
 \intertext{and}
  |\rho_1-(\Proj_1)_\# \gamma| - |\rho_1-(\Proj_1)_\# \gamma^*| &\geq - |(\Proj_1)_\# \gamma-(\Proj_1)_\# \gamma^*| \\&= - |\gamma-\gamma^*|.
 \end{align*}
 By denoting $F$ the functional in \eqref{eq:WTV} written as a function of  a coupling, it holds
 \[
 F(\gamma)-F(\gamma^*) \geq \int_{\Omega^2} (|y-x|^p/p)\d (\gamma-\gamma^*) \geq 0 \, .
 \]
A similar truncation procedure for the other marginal leads to the result.
\end{proof}

We conclude the study of this model with an instantiation of duality formulas, which are direct corollaries of Propositions~\ref{prop: dynamic dual} and~\ref{prop: duality}.

\begin{proposition}\label{prop:dualpartial}
%The equivalence between the static and the dynamic formulations $C_D=C_K$ holds and $C_K^{1/p}$ defines a distance on $\mathcal{M}_+(\Omega)$ which is continuous under weak* convergence. 
We have the dual formulation
\begin{align*}
C_K(\rho_0,\rho_1) &=  \!\!\!\! \sup_{(\phi,\psi) \in C(\Omega)^2} \int_{\Omega} \phi \, \d \rho_0 + \int_{\Omega} \psi \, \d \rho_1
\end{align*}
subject to, for all $(x,y)\in \Omega^2$, $ \phi(x) + \psi(y) \leq \frac1p |y-x|^p$ and $\phi(x), \psi(y) \leq \delta$. Equivalently, 
\begin{align*}
C_K(\rho_0,\rho_1) &= \!\!\!\! \sup_{\varphi \in C^1([0,1]\times \Omega)} \int_{\Omega} \varphi(1,\cdot) \d \rho_1 - \int_{\Omega} \varphi(0,\cdot) \d \rho_0 \, ,
\end{align*}
subject to $ |\varphi| \leq \delta $ and $ \partial_t \varphi + \frac{p-1}{p} |\nabla \varphi|^\frac{p}{p-1} \leq 0 $ .
\end{proposition}

%%%%%%%%%%%%%%%%%%%%%%%%%%%%%%%
\subsection[Static Formulations for WF]{Static Formulation of the Wasserstein-Fisher-Rao metric}
\label{sec:equivalence WF}

Our second and last example deals with the case of the metric $\WF$.

\begin{definition}[$\WF$ metric]
\label{def:WF}
For a parameter $\delta \in ]0,+\infty[$ consider the convex, positively homogeneous, l.s.c.\ function
\begin{equation}
f : \R \times \R^d \times \R \ni (\rho,\omega, \zeta) \mapsto
\begin{cases}
\frac12 \frac{|\omega|^2+ \delta^2\, \zeta^2}{\rho} & \tn{if } \rho>0 \, ,\\
0 & \tn{ if } (\rho,\omega,\zeta)=(0,0,0)\, , \\
+ \infty & \tn{ otherwise,} 
\end{cases} 
\end{equation}
and define, for $\rho_0,\rho_1 \in \mathcal{M}_+(\Omega)$,
\begin{equation}
	\label{eq:WF}
	\WF(\rho_0,\rho_1)^2 \eqdef 
		\inf_{(\rho,\omega,\zeta) \in \mathcal{CE}_0^1(\rho_0,\rho_1)} \int_{[0,1] \times \Omega}
		f\left( \dens{\rho}{\lambda},\dens{\omega}{\lambda},\dens{\zeta}{\lambda} \right)
		\d \lambda
\end{equation}
where $\lambda \in \mathcal{M}_+([0,1] \times \Omega)$ chosen such that $\rho$, $\omega$, $\zeta \ll \lambda$. Due to the 1-homogeneity of $f$ the integral does not depend on the choice of $\lambda$.
\end{definition}

We now show that $\WF$ admits a static formulation, which belongs to the class of models introduced in Section \ref{sec : general static problem}. 
For this model, the distance between weighted Dirac measures $\WF(m_0 \delta_{x_0}, m_1 \delta_{x_1})^2$ has been computed in~\cite{ChizatOTFR2015} and is given by
	\begin{align}
		\label{eq:GeneralizedCost:OTFR}
		c(x_0,m_0,x_1,m_1) & = 2 \delta^2 \left( \vphantom{\sum} m_0 + m_1 - 2\sqrt{m_0\,m_1} \cdot \trucos(|x_0-x_1|/(2 \delta))\, .
		\right) 
		\end{align}
where $\trucos : z \mapsto \cos(|z|\wedge \frac{\pi}{2})$. Note that it is possible to compute directly the minimal path cost for this model and one obtains~\eqref{eq:GeneralizedCost:OTFR} but where the definition of $\overline \cos$ is replaced by $\cos(|z|\wedge \pi)$. This shows that the convex regularization of the cost is a crucial step in Theorem~\ref{th: continuous static general}.
%Remark that $c$ is a cost function and that its square root defines a distance on $\Cone(\Omega)$ (defined as $\Omega \times \R_+$ where we identify points with zero mass $\Omega \times \{0\}$) since it is derived from the $\WF$ distance restricted to Dirac measures. It is direct to see that $c$ satisfies the assumptions of Theorem \ref{th : continuity continuous static}, for $p=2$.
%
\begin{theorem}[Static formulation of the metric]
\label{th: continuous static}
Choosing the cost function \eqref{eq:GeneralizedCost:OTFR}, it holds
\begin{equation}
\label{eq : continuous static}
\WF^2(\rho_0,\rho_1) = \min_{(\gamma_0,\gamma_1) \in \Gamma(\rho_0,\rho_1)} J_K(\gamma_0,\gamma_1)\,.
\end{equation}
\end{theorem}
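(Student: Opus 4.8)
The plan is to obtain \eqref{eq : continuous static} as a direct application of the general equivalence result Theorem~\ref{th: continuous static general}, exploiting that the cost \eqref{eq:GeneralizedCost:OTFR} was chosen to be exactly the Dirac-based cost $c_d$ attached to the $\WF$ infinitesimal cost $f$ of Definition~\ref{def:WF}. First I would record that, with this $f$, the dynamic problem of Definition~\ref{def:DynamicProblem} satisfies $C_D(\rho_0,\rho_1) = \WF^2(\rho_0,\rho_1)$ by Definition~\ref{def:WF}; hence proving $C_D = C_K$ is precisely proving \eqref{eq : continuous static}, and by construction $c = c_d$.

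To invoke Theorem~\ref{th: continuous static general} I must verify its hypotheses. The domain condition is immediate: $\Omega$ is convex and compact, hence star-shaped with respect to a set (itself) with nonempty interior. The sandwiching condition $c_d \leq c \leq c_s$ holds because we have taken $c = c_d$: the left inequality is an equality, and the right inequality reduces to $c_d \leq c_s$, which always holds, since any admissible path $(x(t),m(t))$ in the definition of $c_s$ yields the competitor $m(t)\,\delta_{x(t)} \otimes \d t \in \mathcal{CE}_0^1(m_0\delta_{x_0}, m_1\delta_{x_1})$ for $C_D$. That $c$ is a genuine \emph{cost function} (l.s.c.\ and jointly sublinear in the masses) has already been remarked before the statement.

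The remaining and only substantive hypothesis is the weak* continuity of $C_K$, for which I would appeal to Theorem~\ref{th : continuity continuous static} with $p=2$. Its standing assumption that $c^{1/2}$ be a (pseudo-)metric on $\Cone(\Omega)$ follows from $c = \WF^2$ restricted to Diracs, so that $c^{1/2}$ is the $\WF$ distance between single Dirac masses and thus inherits the triangle inequality on the cone. Assumption (A\ref{asp:ContinuityContinuity}) is direct from the explicit formula, since $c(x,1,y,1) = 4\delta^2\big(1 - \trucos(|x-y|/(2\delta))\big)$ is continuous in $(x,y)$, and assumption (A\ref{asp:ContinuityFiniteness}) holds because $c(x,1,x,0) = 2\delta^2 < \infty$ for every $x$. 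Therefore $C_K$ is weakly* continuous.

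With every hypothesis checked, Theorem~\ref{th: continuous static general} yields $C_D(\rho_0,\rho_1) = C_K(\rho_0,\rho_1)$ (and reconfirms $c = c_d$), which combined with $C_D = \WF^2$ gives \eqref{eq : continuous static}; the infimum defining $C_K$ is attained, justifying the $\min$, by Proposition~\ref{prop:KMinimizers}. I expect no essential obstacle here: the only work is the bookkeeping of the hypotheses of the two auxiliary theorems, as all the analytic heavy lifting (the gluing, scaling, and mollification argument) was already carried out in the proof of Theorem~\ref{th: continuous static general}.
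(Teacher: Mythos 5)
Your proposal is correct and follows essentially the same route as the paper: the paper's proof is exactly the one-line invocation of Theorem~\ref{th: continuous static general}, with the preceding remarks noting that $c$ is the squared $\WF$ distance between Diracs (hence $c=c_d$, and $c^{1/2}$ a metric on $\Cone(\Omega)$) and that the hypotheses of Theorem~\ref{th : continuity continuous static} hold with $p=2$. You merely spell out the hypothesis-checking (convexity of $\Omega$, $c_d\le c\le c_s$, (A1), (A2)) that the paper leaves as ``direct to see''.
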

\begin{proof}
This is a particular case of Theorem \ref{th: continuous static general}: it is clear that when this theorem holds then the convex relaxation of the minimal path cost equals the distance between pairs of weighted Dirac measures.
\end{proof}
\begin{remark}
This theorem can be reformulated as
\begin{multline*}
	 \frac{1}{2\delta^2} \WF^2(\rho_0,\rho_1) 
		=  \rho_0(\Omega) + \rho_1(\Omega)\, + \\
		 \qquad \inf_{(\gamma_0,\gamma_1) \in \Gamma(\rho_0,\rho_1)} - 2 \int_{|y-x|<\pi} \cos(|y-x|/(2\delta)) \d (\sqrt{\gamma_0 \gamma_1})(x,y)  \, .
\end{multline*}
where $\sqrt{\gamma_0\gamma_1} \eqdef \left(\dens{\gamma_0}{\gamma} \dens{\gamma_1}{\gamma}\right)^\frac12 \gamma$ for any $\gamma$ such that $\gamma_0,\gamma_1 \ll \gamma$.
\end{remark}
\begin{corollary}[Dual formulations]
\label{cor : dual static}
It holds
\begin{align*}
%\label{eq : dual static}
\frac{1}{2\delta^2} \WF^2(\rho_0,\rho_1) \quad =
& \sup_{(\phi,\psi)\in C(\Omega)^2  }  \int_\Omega \phi(x) \d \rho_0 + \int_\Omega \psi(y) \d \rho_1 \\
\tn{subject to, $\forall (x,y)\in \Omega^2$: }\quad
& \phi(x) \leq 1\,, \quad \psi(y) \leq 1\, , \\
&(1-\phi(x))(1-\psi(y)) \geq \trucos^2\left(|x-y|/(2\delta)\right)\, .
\end{align*}
or, by the change of variables $u = -\log(1-\phi)$ and $v = -\log(1-\psi)$:
\begin{align*}
%\label{eq : dual static linear}
\frac{1}{2\delta^2} \WF^2(\rho_0,\rho_1) \quad =
& \sup_{(u,v)\in C(\Omega)^2  }  \int_\Omega (1-e^{-u(x)})\d \rho_0 + \int_\Omega (1-e^{-v(y)}) \d \rho_1 \\
\tn{subject to, $\forall (x,y)\in \Omega^2$: }\quad
&u(x)+v(y) \leq -\log \left(\trucos^2\left(|x-y|/(2\delta)\right) \right)\, .
\end{align*}
\end{corollary}
\begin{proof}
By direct computations we find that $c(x,\cdot,y, \cdot) = \iota^*_{Q(x,y)}$ with
\[
Q(x,y) = \left\{ (a,b)\in \R^2 : a,b\leq 1 \tn{ and } (1-a)(1-b)\geq \trucos^2\right(|y-x|/(2\delta)\left) \right\} \, 
\]
and apply Proposition \ref{prop: duality}.
\end{proof}

For the sake of completeness, we state and prove an ``optimal entropy-transport'' formulation of the $\WF$ metric\footnote{this formulation was not present in the preliminary version of this article and  first appeared in \cite{LieroMielkeSavareLong}.}. It is obtained as the dual problem of the second formula in Corollary \ref{cor : dual static}.
\begin{corollary}[Optimal Entropy-Transport formulation]
\label{cor : static log-entropic}
The $\WF$ metric admits an ``optimal entropy-transport'' formulation :
\begin{multline*}
%\label{eq : WF log-entropic}
\frac{1}{2\delta^2} \WF^2(\rho_0,\rho_1) \quad =
\min_{\gamma \in \mathcal{M}_+(\Omega^2)} \Big\{ 
\int_{\Omega^2} c_\ell(x,y) \d \gamma (x,y) \\
+  \KL\left((\proj_0)_\#\gamma| \rho_0\right) + \KL\left((\proj_1)_\# \gamma| \rho_1\right) \Big\}
\end{multline*}
where $c_\ell(x,y) \eqdef -\log \left(\trucos^2\left(|x-y|/(2\delta)\right) \right)$ and 
\[
\KL(\mu|\nu) \eqdef
\begin{cases}
\int_{\Omega} (s\log s -s +1) \d \nu & \text{if $\mu \ll \nu$ and $\dens{\mu}{\nu} = s$} \\
+\infty & \text{otherwise.}
\end{cases}
\]
Moreover, the minimum is attained.
\end{corollary}
\begin{proof}
Let us compute the dual of the second formulation of Corollary \ref{cor : dual static}. It can be rewritten
\[
\sup_{(u,v) \in C(\Omega)^2} - F_0(-u) - F_1(-v) - G(A(u,v))
\]
where $A(u,v)(x,y) \eqdef u(x) +v(y)$ for all $(x,y)\in \Omega^2$, $F_i(u) \eqdef \int (e^{u}-1)\d \rho_i$ and $G(w) \eqdef 0$ if $w(x,y)\leq c_\ell(x,y)$ for all $x,y$ and $+\infty$ otherwise. Note that $c_\ell$ is nonnegative : it is thus easy to find a couple $(u,v)\in C(\Omega)^2$ such that $G$ is continuous at $A(u,v)$ (take for instance $u=v=-1$). Thus the Fenchel-Rockafellar Theorem applies, and there is strong duality with
\[
\min_{\gamma \in \mathcal{M}(\Omega^2)} F_0^*((\proj_0)_\# \gamma) + F_1^*((\proj_1)_\# \gamma) + G^*(\gamma)\, .
\]
since the adjoint operator of $A: C(\Omega)^2 \to C(\Omega^2)$ is the operator $\mathcal{M}(\Omega^2) \to \mathcal{M}(\Omega)^2$ which maps a measure to its two marginals. Moreover, by direct computations, $G^*(\gamma) = \iota_{\geq 0}(\gamma) + \int c_\ell \d \gamma$. Finally, the expressions $F^*_0 = \KL(\cdot|\rho_0)$ and $F^*_1 = \KL(\cdot|\rho_1)$ are justified by Lemma~\ref{lem:dualitymeasures}.
%where $A\gamma \eqdef ((\proj_0)_\#\gamma, (\proj_1)_\#\gamma)$ is linear and bounded, $F(\gamma) \eqdef \int c_\ell \d \gamma$ is linear and $G(\mu_0,\mu_1) = \KL(\mu_0|\rho_0)+\KL(\mu_1|\rho_1)$.
\end{proof}

%%%%%%%%%%%%%%%%%%%%%%%%%
% Gamma Convergence
%%%%%%%%%%%%%%%%%%%%%%%%%

\subsection[Gamma-convergence of Static WF]{$\Gamma$-convergence of Static $\WF$}
\label{sec:StaticGamma}

In~\cite{ChizatOTFR2015} the limit of the growth penalty parameter $\delta \rightarrow \infty$ of $\WF$ is studied and related to classical optimal transport. Here we give the corresponding result for the static problems in terms of $\Gamma$-convergence \cite{GammaConvergenceBraides2002}. Recall that this implies both convergence of the optimal values as well as convergence of minimizers.
We now denote by $c_\delta$ the cost defined in \eqref{eq:GeneralizedCost:OTFR} to emphasize its dependency on $\delta$.

\begin{theorem}[(Almost) Classical OT as Limit of $\WF$]
	\label{th:static gamma convergence}
	Consider the following two generalized static optimal transport functionals:
	\begin{align}
		\label{eq:modified static WF}
		J_\delta(\gamma_0,\gamma_1) & = \int_{\Omega^2} c_\delta 
		\left( x,\dens{ \gamma_0}{ \gamma},y,\dens{ \gamma_1}{ \gamma}\right)
		\d \gamma(x,y)		
		 - 2\,\delta^2 (\sqrt{\gamma_0(\Omega^2)}-\sqrt{\gamma_1(\Omega^2)})^2 \\
		 \intertext{where as before $\gamma \in \mathcal{M}_+(\Omega^2)$ is any measure such that $\gamma_0, \gamma_1 \ll \gamma$ and the integral does not depend on $\gamma$ due to 1-homogeneity of $c_\delta$.}
		J_\infty(\gamma_0,\gamma_1) & = \begin{cases}
			0 & \text{if } \gamma_0=0 \text{ or } \gamma_1 = 0\,, \\
			\int_{\Omega^2} |x-y|^2 \d\gamma_0(x,y) \cdot  \frac{\sqrt{\alpha}}{2} & \text{if } \gamma_1 = \alpha \, \gamma_0 \text{ for some } \alpha>0 \,, \\
			\infty & \text{otherwise.}
			\end{cases}
	\end{align}
	Then $J_\delta$ $\Gamma$-converges to $J_\infty$ as $\delta \rightarrow \infty$.
\end{theorem}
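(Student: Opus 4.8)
The plan is to verify directly the two defining inequalities of $\Gamma$-convergence --- the $\liminf$ lower bound and the existence of a recovery sequence --- in the weak* topology on $\big(\mathcal{M}_+(\Omega^2)\big)^2$. The starting point is an algebraic simplification. Writing $M_i\eqdef\gamma_i(\Omega^2)$ and $m_i\eqdef\d\gamma_i/\d\gamma$ for a reference $\gamma$ with $\gamma_0,\gamma_1\ll\gamma$, expanding $c_\delta$ and cancelling the subtracted term $2\delta^2(\sqrt{M_0}-\sqrt{M_1})^2$ yields the clean identity
\[
J_\delta(\gamma_0,\gamma_1)=4\delta^2\Big(\sqrt{M_0 M_1}-\int_{\Omega^2}\sqrt{m_0 m_1}\,\trucos(|x-y|/(2\delta))\,\d\gamma\Big)\ge 0,
\]
nonnegativity following from $\trucos\le1$ and Cauchy--Schwarz. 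I then split $J_\delta=A_\delta+B_\delta$ with $A_\delta\eqdef4\delta^2 D$, where $D\eqdef\sqrt{M_0M_1}-\int\sqrt{m_0m_1}\,\d\gamma\ge0$ is the Cauchy--Schwarz defect, and $B_\delta\eqdef\int g_\delta\,\d\sqrt{\gamma_0\gamma_1}$ with $g_\delta(x,y)\eqdef4\delta^2\big(1-\trucos(|x-y|/(2\delta))\big)\ge0$. Since $\Omega$ is compact, for $\delta$ large $\trucos=\cos$ on $\Omega^2$, and a Taylor estimate gives $g_\delta\to\tfrac12|x-y|^2$ uniformly on $\Omega^2$.

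For the recovery sequence I take the constant sequence $(\gamma_0,\gamma_1)$. If $\gamma_0=0$ or $\gamma_1=0$ the identity gives $J_\delta\equiv0=J_\infty$; if $\gamma_1=\alpha\gamma_0$ it reduces to $J_\delta=\sqrt\alpha\int g_\delta\,\d\gamma_0\to\tfrac{\sqrt\alpha}{2}\int|x-y|^2\,\d\gamma_0=J_\infty$ by uniform convergence of $g_\delta$; and if $J_\infty=\infty$ there is nothing to prove. This gives $\limsup_\delta J_\delta\le J_\infty$.

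For the $\liminf$ bound, let $(\gamma_0^\delta,\gamma_1^\delta)\rightharpoonup^*(\gamma_0,\gamma_1)$, assume $\liminf_\delta J_\delta=L<\infty$, and pass to a subsequence along which $J_\delta\to L$. Then $A_\delta\le J_\delta$ is bounded, so $D^\delta=A_\delta/(4\delta^2)\to0$. The crucial analytic input is that the geometric-mean functional $(\gamma_0,\gamma_1)\mapsto\int h\,\d\sqrt{\gamma_0\gamma_1}$ is weak* upper semicontinuous for every continuous $h\ge0$, via the dual representation $\int h\,\d\sqrt{\gamma_0\gamma_1}=\inf\{\tfrac12(\int a\,\d\gamma_0+\int b\,\d\gamma_1):a,b\in C(\Omega^2),\,a,b>0,\,ab\ge h^2\}$ as an infimum of weak*-continuous affine functionals. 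Taking $h\equiv1$ shows $D$ is weak* lower semicontinuous, whence $D(\gamma_0,\gamma_1)\le\liminf_\delta D^\delta=0$; the equality case of Cauchy--Schwarz then forces $\gamma_1=\alpha\gamma_0$ for some $\alpha\ge0$ (or $\gamma_0=0$), i.e.\ $J_\infty(\gamma_0,\gamma_1)<\infty$. I next upgrade this to $\sqrt{\gamma_0^\delta\gamma_1^\delta}\rightharpoonup^*\sqrt{\gamma_0\gamma_1}$: total masses match because $\sqrt{\gamma_0^\delta\gamma_1^\delta}(\Omega^2)=\sqrt{M_0^\delta M_1^\delta}-D^\delta\to\sqrt{\gamma_0\gamma_1}(\Omega^2)$, while upper semicontinuity applied to continuous functions decreasing to $\mathbf 1_F$ gives $\limsup_\delta\sqrt{\gamma_0^\delta\gamma_1^\delta}(F)\le\sqrt{\gamma_0\gamma_1}(F)$ for every closed $F$; matching masses together with this one-sided bound force each subsequential weak* limit to coincide with $\sqrt{\gamma_0\gamma_1}$, hence the whole sequence converges. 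Combining the uniform convergence $g_\delta\to\tfrac12|x-y|^2$ with this weak* convergence gives $B_\delta\to\tfrac{\sqrt\alpha}{2}\int|x-y|^2\,\d\gamma_0=J_\infty$, and $J_\delta\ge B_\delta$ finally yields $L\ge J_\infty$.

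The main obstacle is precisely this last step: promoting the vanishing of the scalar defect $D^\delta$ to the exact weak* convergence of the geometric-mean measures $\sqrt{\gamma_0^\delta\gamma_1^\delta}$. Upper semicontinuity by itself only controls the $\limsup$ of masses of closed sets --- the wrong direction for a $\liminf$ bound on $B_\delta$ --- so the argument genuinely relies on the mass-matching identity to pin down the limit and convert the one-sided estimate into full convergence. Once this is in place, the remaining ingredients (the elementary rewriting of $J_\delta$ and the uniform Taylor estimate for $g_\delta$) are routine.
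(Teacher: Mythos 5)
Your proposal is correct in substance and, for the lim-inf half, takes a genuinely different route from the paper. Both proofs isolate the same two pieces: your Cauchy--Schwarz defect $A_\delta=4\delta^2 D$ is exactly the paper's term $2k^2F(\gamma_0,\gamma_1)$ (with $F$ controlled by its Lemma on $-\sqrt{\mu_1\mu_2}(A)$), and your $B_\delta$ is what the paper obtains by Taylor-expanding $\trucos$ with an explicit $O(1/k^2)$ error. The divergence is in how the lim-inf is closed. The paper never proves convergence of the geometric-mean measures: it compares $J_{k_2}$ to $J_{k_1}$ at the \emph{same} argument, uses weak* lower semicontinuity of the fixed functional $J_{k_1}$ and of $F$, sends $k_2\to\infty$ to rule out limits outside $\mc{S}$, and then lets $k_1\to\infty$. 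You instead prove the stronger intermediate statement $\sqrt{\gamma_0^\delta\gamma_1^\delta}\rightharpoonup^*\sqrt{\gamma_0\gamma_1}$ from the vanishing of $D^\delta$ plus upper semicontinuity of $\mu\mapsto\int h\,\d\sqrt{\mu_1\mu_2}$ and mass matching, and then pass to the limit in $B_\delta$ against the uniformly convergent $g_\delta$. Your route buys a cleaner exact identity for $J_\delta$ (no bookkeeping of $I\in[0,1]$ error terms) and actual convergence, not just one-sided bounds, along the recovery sequence; the paper's route avoids having to identify the limit of $\sqrt{\gamma_0^\delta\gamma_1^\delta}$ at all, at the cost of the slightly opaque two-index comparison.

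One step should be tightened. As written, you derive $\limsup_\delta\sqrt{\gamma_0^\delta\gamma_1^\delta}(F)\le\sqrt{\gamma_0\gamma_1}(F)$ for closed $F$ and then invoke mass matching; but Portmanteau bounds $\limsup_j\mu_j(F)$ \emph{from above} by $\nu(F)$ for a subsequential limit $\nu$, so the closed-set inequality does not chain into $\nu(F)\le\sqrt{\gamma_0\gamma_1}(F)$. The repair is immediate with the tool you already introduced: for any continuous $h\ge0$, $\int h\,\d\nu=\lim_j\int h\,\d\sqrt{\gamma_0^{\delta_j}\gamma_1^{\delta_j}}\le\int h\,\d\sqrt{\gamma_0\gamma_1}$ by upper semicontinuity, whence $\nu\le\sqrt{\gamma_0\gamma_1}$ as measures, and equality of total masses forces $\nu=\sqrt{\gamma_0\gamma_1}$. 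Separately, the dual (infimum) representation you use to justify upper semicontinuity is classical but stated without proof; it can be replaced by the paper's own device, namely writing $-\int h\,\d\sqrt{\gamma_0\gamma_1}=\int \frac{h}{2}\bigl(\sqrt{m_0}-\sqrt{m_1}\bigr)^2\d\gamma-\frac12\int h\,\d\gamma_0-\frac12\int h\,\d\gamma_1$ and applying the Reshetnyak-type lower semicontinuity result already cited in the proof of existence of minimizers.
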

\begin{remark}
	One has $\lim_{\delta \rightarrow \infty} \WF(\rho_0,\rho_1) = \infty$ if $\rho_0(\Omega) \neq \rho_1(\Omega)$. Consequently, to properly study the limit, we subtract the diverging terms in \eqref{eq:modified static WF}. Conversely, we slightly modify the classical OT functional, to assign finite cost when the two couplings are strict multiples of each other. The corresponding optimization problem is solved by computing the optimal transport plan between normalized marginals and then multiplying by the geometric mean of the marginal masses.
	The above result implies
	\begin{multline*}
		\lim_{\delta \to \infty} \WF(\rho_0,\rho_1)^2 - 2\delta^2 (\sqrt{\rho_0(\Omega)}-\sqrt{\rho_1(\Omega)})^2 \\ = W_2
		\left(
		\tfrac{\rho_0}{\rho_0(\Omega)},
		\tfrac{\rho_1}{\rho_1(\Omega)}
		\right)^2 \cdot \sqrt{\rho_0(\Omega) \rho_1(\Omega)}
	\end{multline*}
	where $W_2$ denotes the standard 2-Wasserstein distance w.r.t.~the transport cost function $c(x,y)=\tfrac{|x-y|^2}{2}$. In particular, if $\rho_0(\Omega)=\rho_1(\Omega)$ then
	\begin{align*}
		\lim_{\delta \to \infty} \WF(\rho_0,\rho_1)=W_2(\rho_0,\rho_1).
	\end{align*}
\end{remark}
%\todo{it would be nice to have the convergence for $WF$ explicitly}
%
%\todo{comments of the reviewer: a minus is missing in the lower bound (check). sqrt measure is due to Hellinger (1907). }
\noindent The proof uses the following Lemma.
\begin{lemma}[Sqrt-Measure]
	\label{lemma:SqrtMeasure1}
Let $A \subset \R^n$ be a compact set. The function
\begin{align}
	\mc{M}_+(A)^2 \ni \mu=(\mu_1,\mu_2) \mapsto -\sqrt{\mu_1 \cdot \mu_2}(A)
\end{align}
is weakly* l.s.c.~and bounded from below by $-\sqrt{\mu_1(A)} \cdot \sqrt{\mu_2(A)}$. The lower bound is only attained, if $\mu_1=0$ or $\mu_2=0$ or $\mu_1 = \alpha \cdot \mu_2$ for some $\alpha > 0$.
\end{lemma}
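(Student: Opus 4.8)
The plan is to treat the two assertions separately: the pointwise lower bound together with its equality cases, which is an instance of the Cauchy--Schwarz inequality, and the weak* lower semicontinuity, which follows from the theory of convex $1$-homogeneous integral functionals of measures already invoked in Proposition~\ref{prop:KMinimizers}. Throughout I would recall that $\sqrt{\mu_1\mu_2}$ is defined, as in the remark following Theorem~\ref{th: continuous static}, by $\sqrt{\mu_1\mu_2} \eqdef \big(\tfrac{\mu_1}{\gamma}\tfrac{\mu_2}{\gamma}\big)^{1/2}\gamma$ for any $\gamma \in \mathcal{M}_+(A)$ with $\mu_1,\mu_2 \ll \gamma$; writing $f_i \eqdef \tfrac{\d\mu_i}{\d\gamma} \ge 0$, the quantity $\sqrt{\mu_1\mu_2}(A) = \int_A \sqrt{f_1 f_2}\,\d\gamma$ is independent of the choice of $\gamma$ by $1$-homogeneity of $(s,t)\mapsto \sqrt{st}$.

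For the bound I would apply Cauchy--Schwarz in $L^2(\gamma)$ to $\sqrt{f_1}$ and $\sqrt{f_2}$:
\[
\int_A \sqrt{f_1 f_2}\,\d\gamma \le \Big(\int_A f_1\,\d\gamma\Big)^{1/2}\Big(\int_A f_2\,\d\gamma\Big)^{1/2} = \sqrt{\mu_1(A)}\,\sqrt{\mu_2(A)},
\]
so that $-\sqrt{\mu_1\mu_2}(A) \ge -\sqrt{\mu_1(A)}\sqrt{\mu_2(A)}$, the asserted lower bound. For the equality discussion I would use the equality case of Cauchy--Schwarz: the above is an equality iff $\sqrt{f_1}$ and $\sqrt{f_2}$ are linearly dependent in $L^2(\gamma)$. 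This splits into $\sqrt{f_2}=0$ $\gamma$-a.e.\ (that is $\mu_2=0$), or $\sqrt{f_1}=c\,\sqrt{f_2}$ $\gamma$-a.e.\ for some $c\ge 0$; the subcase $c=0$ gives $\mu_1=0$, while $c>0$ gives $f_1 = c^2 f_2$ $\gamma$-a.e., i.e.\ $\mu_1 = \alpha\,\mu_2$ with $\alpha = c^2>0$. The converse inclusions being immediate, this yields exactly the three stated equality cases.

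For the lower semicontinuity I would avoid applying a semicontinuity theorem directly to the sign-indefinite integrand $-\sqrt{st}$ and instead introduce
\[
\tilde f(s,t) \eqdef \tfrac{s+t}{2} - \sqrt{st} = \tfrac12\big(\sqrt{s}-\sqrt{t}\big)^2,
\]
defined on $\R_+^2$ and set to $+\infty$ if $s<0$ or $t<0$. This $\tilde f$ is nonnegative, convex, positively $1$-homogeneous and lower semicontinuous, so by \cite[Theorem 2.38]{ambrosio2000functions} (applied exactly as in Proposition~\ref{prop:KMinimizers}, extending to a slightly larger open set) the functional $\mu \mapsto \int_A \tilde f\big(\tfrac{\d\mu_1}{\d\gamma},\tfrac{\d\mu_2}{\d\gamma}\big)\,\d\gamma = \tfrac12(\mu_1(A)+\mu_2(A)) - \sqrt{\mu_1\mu_2}(A)$ is weakly* lower semicontinuous on $\mathcal{M}_+(A)^2$. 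Since $\mu \mapsto \tfrac12(\mu_1(A)+\mu_2(A))$ is weakly* continuous (test against the constant function $1 \in C(A)$, using compactness of $A$), subtracting it shows that $\mu \mapsto -\sqrt{\mu_1\mu_2}(A)$ is weakly* lower semicontinuous, as claimed.

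The main obstacle I anticipate is less the abstract semicontinuity step than being careful with the degenerate configurations in the equality analysis --- in particular tracking the sets where one of the densities vanishes --- and verifying that $\tilde f$ genuinely satisfies the hypotheses, namely the convexity and $1$-homogeneity of $-\sqrt{st}$ (which follow from a positive-semidefinite Hessian computation), so that the cited integral-functional result indeed applies to the vector measure $(\mu_1,\mu_2)$.
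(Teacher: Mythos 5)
Your proof is correct. The lower semicontinuity half is essentially identical to the paper's: both pass to $\tilde f(s,t)=\tfrac12(\sqrt{s}-\sqrt{t})^2$, invoke \cite[Theorem 2.38]{ambrosio2000functions} as in Proposition~\ref{prop:KMinimizers}, and use that the total masses converge under weak* convergence to remove the linear terms. For the lower bound and its equality cases you take a slightly different route: you fix a common dominating measure $\gamma$ and apply Cauchy--Schwarz in $L^2(\gamma)$ to $\sqrt{f_1}$ and $\sqrt{f_2}$, reading off the three equality cases from linear dependence of the root densities. The paper instead takes the Radon--Nikod\'ym decomposition $\mu_1=\lambda\cdot\mu_2+\mu_{1,\perp}$ and applies Jensen's inequality to $\sqrt{\lambda}$ against the normalized $\mu_2$, with equality forcing $\lambda$ constant and $\mu_{1,\perp}=0$. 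The two arguments are of equal strength, but yours treats the degenerate cases ($\mu_1=0$ or $\mu_2=0$) symmetrically and without the implicit normalization by $\mu_2(A)$ that the Jensen step requires, and it delivers the three equality alternatives in one stroke; the paper's version makes the geometric content (absolute continuity plus constant density) slightly more visible. Note also that you correctly read the stated lower bound as $-\sqrt{\mu_1(A)}\,\sqrt{\mu_2(A)}$, the sign being a typo in the lemma as printed.
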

\begin{proof}
	With $f(x) = (\sqrt{x_1}-\sqrt{x_2})^2/2$ and a reference measure $\nu \in \mc{M}_+(A)$ with $\mu \ll \nu$ we can write
	\begin{align*}
		-\sqrt{\mu_1 \cdot \mu_2}(A) & = \int_A f\left( \frac{\mu}{\nu} \right)\,\d\nu - \mu_1(A)/2 - \mu_2(A)/2
	\end{align*}
	Since $f$ is 1-homogeneous, the evaluation does not depend on the choice of $\nu$. As $f$ is convex, l.s.c., bounded from below, $A$ is bounded, and total masses converge, lower semi-continuity of the functional now follows from \cite[Thm.~2.38]{ambrosio2000functions} (see proof of Proposition \ref{prop:KMinimizers} for adaption to $\Omega$ closed).
		
	For the lower bound, let $\mu_1 = \lambda \cdot \mu_2 + \mu_{1,\perp}$ be the Radon-Nikod\'ym decomposition of $\mu_1$ w.r.t.~$\mu_2$. Then have
	\begin{align*}
		-\sqrt{\mu_1 \cdot \mu_2}(A) & = - \int_A \sqrt{\lambda}\, \d\mu_2
			\geq - \left( \int_A \lambda\, \d\mu_2 \cdot \mu_2(A) \right)^{\tfrac12}
			\geq -\left(\mu_1(A) \cdot \mu_2(A) \right)^{\tfrac12}
	\end{align*}
	where the first inequality is due to Jensen's inequality, with equality only if $\lambda$ is constant. The second inequality is only an equality if $\mu_{1,\perp}=0$.
\end{proof}

\begin{proof}[Proof of Theorem \ref{th:static gamma convergence}]
\textbf{Lim-Sup.}
For every pair $(\gamma_0,\gamma_1)$ a recovery sequence is given by the constant sequence $(\gamma_0,\gamma_1)_{n\in \N}$. The cases $\gamma_i=0$ for $i=0$ or $1$, and $\gamma_1 \neq \alpha \, \gamma_0$ for every $\alpha>0$ are trivial. Therefore, let now $\gamma_1 = \alpha\,\gamma_0$ for some $\alpha>0$.
We find
\begin{align*}
	J_\delta(\gamma_0,\alpha\,\gamma_0) & = \int_{\Omega^2} 2\,\delta^2 \left[
		(1+\alpha) - 2 \sqrt{\alpha}\,\trucos(|x-y|/(2\delta)) \right] \d\gamma_0(x,y) \\
		& \qquad - 2\,\delta^2\,\gamma_0(\Omega^2)\,(1-\sqrt{\alpha})^2 \\
	\intertext{Now use $\trucos(z) \geq 1-z^2/2$ to find:}
	& \leq \int_{\Omega^2} 4\,\delta^2\,\sqrt{\alpha} \frac{|x-y|^2}{8\,\delta^2}\, \d\gamma_0(x,y) \\
	& = J_\infty(\gamma_0,\gamma_1)
\end{align*}

\textbf{Lim-Inf.} For a sequence of couplings $(\gamma_{0,k},\gamma_{1,k})_{k \in \N}$ converging weakly* to some pair $(\gamma_{0,\infty},\gamma_{1,\infty})$ we now study the sequence of values $J_k(\gamma_{0,k},\gamma_{1,k})$. Note first, that $J_k$ is weakly* l.s.c.~since the integral part is l.s.c.~(cf.~Proposition \ref{prop:KMinimizers}) and the second term is continuous (total masses converge).

Since $\Omega$ is compact, there is some $N_1 \in \N$ such that for $k>N_1$, we have
\begin{align*}
	1-z^2/2 \leq \trucos(z) \leq 1-z^2/2+z^4/24 \qquad \text{for} \qquad z=|x-y|/(2\,k),\,x,y \in \Omega\,.
\end{align*}
Let now $k>N_1$, $(\gamma_0,\gamma_1) \in \mathcal{M}_+(\Omega^2)^2$ and $\gamma \in \mathcal{M}_+(\Omega^2)$ such that $(\gamma_0,\gamma_1) \ll \gamma$. Denote $A \eqdef \sqrt{\gamma_0(\Omega^2)} - \sqrt{\gamma_1(\Omega^2)}$. Then
\begin{align*}
	J_k(\gamma_0,\gamma_1) & = 2\,k^2 \left(
	\int_{\Omega^2} \left( \sqrt{\dens{\gamma_0}{\gamma}}-\sqrt{\dens{\gamma_1}{\gamma}} \right)^2 \d \gamma
	- A^2 \right) \nonumber \\
	& \quad + 4\,k^2\,\int_{\Omega^2} \frac{|x-y|^2}{8\,k^2}\,\sqrt{\dens{\gamma_0}{\gamma}\dens{\gamma_1}{\gamma}}\, \d\gamma(x,y) \\
	& \quad - I \cdot 4\,k^2\,\int_{\Omega^2} \frac{|x-y|^4}{24 \cdot (2\,k)^4} \,\sqrt{\dens{\gamma_0}{\gamma} \dens{\gamma_1}{\gamma}}\, \d\gamma(x,y)
\end{align*}
for some $I \in [0,1]$.

Since $\Omega$ is bounded, by means of Lemma \ref{lemma:SqrtMeasure1} and since the total masses of $\gamma_{i,k}$, $i=0,1$ are converging towards the total masses of $\gamma_{i,\infty}$ as $k \rightarrow \infty$, there is a constant $C>0$ and some $N_2 \geq N_1$ such that the coefficient for $I$ in the third line can be bounded by $C/k^2$ for $k > N_2$ for when calling with arguments $(\gamma_{0,k},\gamma_{1,k})$.

For the first line we write briefly $2\,k^2\,F(\gamma_0,\gamma_1)$. From Lemma \ref{lemma:SqrtMeasure1} we find that $F$ is weakly* l.s.c., $F\geq0$ and $F(\gamma_0,\gamma_1)=0$ if and only if $(\gamma_0,\gamma_1) \in \mc{S}$ with
\begin{equation*}
	\mc{S} = \left\{ (\gamma_0,\gamma_1) \in \mc{M}_+(\Omega^2)^2 \,\colon\, \gamma_0 = 0 \tn{ or } \gamma_1 = 0 \tn{ or } \gamma_1 = \alpha \cdot \gamma_0 \tn{ for some } \alpha > 0 \right\}\,.
\end{equation*}
It follows that for $N_2 < k_1 < k_2$ one has
\begin{align*}
	J_{k_2}(\gamma_{0,k_2},\gamma_{1,k_2}) & = J_{k_1}(\gamma_{0,k_2},\gamma_{1,k_2}) + 2\,(k_2^2-k_1^2)\,F(\gamma_{0,k_2},\gamma_{1,k_2}) + I \cdot C/k_1^2
\end{align*}
for some $I \in [-1,1]$. Now consider the joint limit:
\begin{align*}
	\liminf_{k \rightarrow \infty} J_k(\gamma_{0,k},\gamma_{1,k}) & \geq \liminf_{k \rightarrow \infty} J_{k_1}(\gamma_{0,k},\gamma_{1,k}) + \liminf_{k \rightarrow \infty} 2(k^2 - k_1^2)\,F(\gamma_{0,k},\gamma_{1,k}) - C/k_1^2 \\
	& \geq J_{k_1}(\gamma_{0,\infty},\gamma_{1,\infty}) + 2(k_2^2 - k_1^2)\,F(\gamma_{0,\infty},\gamma_{1,\infty}) - C/k_1^2
\end{align*}
for any $N_2 < k_1 < k_2$ (by using weak* l.s.c.~of $J_{k_1}$ and $F$ and non-negativity of $F$). Since $J_{k_1} > - \infty$ and $k_2$ can be chosen arbitrarily large, we find
\begin{equation*}
	\liminf_{k \rightarrow \infty} J_k(\gamma_{0,k},\gamma_{1,k}) = \infty \quad \text{for} \quad (\gamma_{0,\infty},\gamma_{1,\infty}) \notin \mc{S}\,.
\end{equation*}
By reasoning analogous to the lim-sup case (adding the $z^4$ term in the $\trucos$-expansion to get a lower bound and bounding its value as above) we find
\begin{equation*}
	\liminf_{k \rightarrow \infty} J_k(\gamma_{0,k},\gamma_{1,k}) \geq J_\infty(\gamma_{0,\infty},\gamma_{1,\infty}) \quad \text{for} \quad (\gamma_{0,\infty},\gamma_{1,\infty}) \in \mc{S}\,.
	\qedhere  %% BUG with ARMA style
\end{equation*}
\end{proof}

% !TEX root = ../DynamicToStatic.tex

\section*{Conclusion and Perspectives}

In this paper, we presented a unified treatment of unbalanced optimal transport that allows for both static and dynamic formulations. Our key findings are (i) a new class of dynamic unbalanced optimal transport problems, (ii) a new class of static optimal transport formulations involving semi-couplings, (iii) an equivalence between these static formulations and a class of dynamic formulations. 
%Each of these contributions is of independent interest, but the synergy between the dynamic and the static problems allows to get a clear picture of the unbalanced transportation problem. Beside these theoretical advances, w
We believe that a key aspect of this work is that the proposed static formulation opens the door to a new class of numerical solvers for unbalanced optimal transport. These solvers should leverage the specific structure of the cost $c$ considered for each application, a striking example being the $\WF$ cost~\eqref{eq:GeneralizedCost:OTFR}.

% we studied the geometry of a generalized optimal transport and we established a correspondence between dynamic formulations of these models and a new Kantorovich formulation. This is a key improvement for numerical efficiency.

%%%%
\section*{Acknowledgements}

The work of Bernhard Schmitzer has been supported by the Fondation Sciences Math\'ematiques de Paris. 
The work of Gabriel Peyr\'e has been supported by the European Research Council (ERC project SIGMA-Vision). 
The work of Fran\c{c}ois-Xavier Vialard has been supported by the CNRS (D\'efi Imag'in de la Mission pour l'Interdisciplinarit\'e, project CAVALIERI).
\\
We would like to thank Yann Brenier for stimulating discussions.% as well as Peter Michor, in particular for several important references.

\bibliographystyle{spmpsci}
\bibliography{SecOrdLandBig,SecOrdLand,bibchizat,references}

\begin{thebibliography}{10}
\providecommand{\url}[1]{{#1}}
\providecommand{\urlprefix}{URL }
\expandafter\ifx\csname urlstyle\endcsname\relax
  \providecommand{\doi}[1]{DOI~\discretionary{}{}{}#1}\else
  \providecommand{\doi}{DOI~\discretionary{}{}{}\begingroup
  \urlstyle{rm}\Url}\fi

\bibitem{ambrosio2000functions}
Ambrosio, L., Fusco, N., Pallara, D.: Functions of bounded variation and free
  discontinuity problems, vol. 254.
\newblock Clarendon Press Oxford (2000)

\bibitem{benamou2003numerical}
Benamou, J.D.: Numerical resolution of an ``unbalanced'' mass transport
  problem.
\newblock ESAIM: Mathematical Modelling and Numerical Analysis \textbf{37}(05),
  851--868 (2003)

\bibitem{benamou2000computational}
Benamou, J.D., Brenier, Y.: A computational fluid mechanics solution to the
  {M}onge-{K}antorovich mass transfer problem.
\newblock Numerische Mathematik \textbf{84}(3), 375--393 (2000)

\bibitem{bouchitte1988integral}
Bouchitt{\'e}, G., Valadier, M.: Integral representation of convex functionals
  on a space of measures.
\newblock Journal of functional analysis \textbf{80}(2), 398--420 (1988)

\bibitem{GammaConvergenceBraides2002}
Braides, A.: Gamma-Convergence for Beginners.
\newblock Oxford University Press (2002)

\bibitem{caffarelli2010free}
Caffarelli, L., McCann, R.J.: Free boundaries in optimal transport and
  {M}onge-{A}mpere obstacle problems.
\newblock Annals of mathematics \textbf{171}(2), 673--730 (2010)

\bibitem{ChizatOTFR2015}
Chizat, L., Peyr{\'e}, G., Schmitzer, B., Vialard, F.X.: An interpolating
  distance between optimal transport and {F}isher--{R}ao metrics.
\newblock Foundations of Computational Mathematics pp. 1--44

\bibitem{dolbeault2009new}
Dolbeault, J., Nazaret, B., Savar{\'e}, G.: A new class of transport distances
  between measures.
\newblock Calculus of Variations and Partial Differential Equations
  \textbf{34}(2), 193--231 (2009)

\bibitem{figalli2010optimal}
Figalli, A.: The optimal partial transport problem.
\newblock Archive for rational mechanics and analysis \textbf{195}(2), 533--560
  (2010)

\bibitem{FrognerNIPS}
Frogner, C., Zhang, C., Mobahi, H., Araya, M., Poggio, T.A.: Learning with a
  {W}asserstein loss.
\newblock In: Advances in Neural Information Processing Systems, pp. 2053--2061
  (2015)

\bibitem{GramfortMICCAI}
Gramfort, A., Peyr\'e, G., Cuturi, M.: Fast optimal transport averaging of
  neuroimaging data.
\newblock In: Proc. IPMI'15 (2015)

\bibitem{guittet2002extended}
Guittet, K.: Extended {K}antorovich norms: a tool for optimization.
\newblock Tech. rep., Tech. Rep. 4402, {INRIA} (2002)

\bibitem{hanin1999extension}
Hanin, L.: An extension of the {K}antorovich norm.
\newblock Contemporary Mathematics \textbf{226}, 113--130 (1999)

\bibitem{kantorovich1958space}
Kantorovich, L., Rubinshtein, G.: On a space of totally additive functions.
\newblock Vestn Lening. Univ. \textbf{13}, 52--59 (1958)

\bibitem{new2015kondratyev}
Kondratyev, S., Monsaingeon, L., Vorotnikov, D., et~al.: A new optimal
  transport distance on the space of finite {R}adon measures.
\newblock Advances in Differential Equations \textbf{21}(11/12), 1117--1164
  (2016)

\bibitem{LieroMielkeSavareLong}
{Liero}, M., {Mielke}, A., {Savar{\'e}}, G.: {Optimal Entropy-Transport
  problems and a new Hellinger-Kantorovich distance between positive measures}.
\newblock ArXiv e-prints  (2015)

\bibitem{LieroMielkeSavareShort}
Liero, M., Mielke, A., Savar{\'e}, G.: Optimal transport in competition with
  reaction: The {H}ellinger--{K}antorovich distance and geodesic curves.
\newblock SIAM Journal on Mathematical Analysis \textbf{48}(4), 2869--2911
  (2016)

\bibitem{lombardi2013eulerian}
Lombardi, D., Maitre, E.: Eulerian models and algorithms for unbalanced optimal
  transport.
\newblock ESAIM: Mathematical Modelling and Numerical Analysis \textbf{49}(6),
  1717--1744 (2015)

\bibitem{OTmaasrumpf}
Maas, J., Rumpf, M., Sch{\"o}nlieb, C., Simon, S.: A generalized model for
  optimal transport of images including dissipation and density modulation.
\newblock ESAIM: Mathematical Modelling and Numerical Analysis \textbf{49}(6),
  1745--1769 (2015)

\bibitem{maniglia2007probabilistic}
Maniglia, S.: Probabilistic representation and uniqueness results for
  measure-valued solutions of transport equations.
\newblock Journal de math{\'e}matiques pures et appliqu{\'e}es \textbf{87}(6),
  601--626 (2007)

\bibitem{pele2008linear}
Pele, O., Werman, M.: A linear time histogram metric for improved sift
  matching.
\newblock In: European Conference on Computer Vision, ECCV 2008, pp. 495--508
  (2008)

\bibitem{piccoli2014generalized}
Piccoli, B., Rossi, F.: Generalized {W}asserstein distance and its application
  to transport equations with source.
\newblock Archive for Rational Mechanics and Analysis \textbf{211}(1), 335--358
  (2014)

\bibitem{piccoli2013properties}
Piccoli, B., Rossi, F.: On properties of the generalized {W}asserstein
  distance.
\newblock Archive for Rational Mechanics and Analysis \textbf{222}(3),
  1339--1365 (2016)

\bibitem{rockafellar1971integrals}
Rockafellar, R.: Integrals which are convex functionals. {II}.
\newblock Pacific Journal of Mathematics \textbf{39}(2), 439--469 (1971)

\bibitem{rockafellar2015convex}
Rockafellar, R.: Convex analysis.
\newblock Princeton university press (2015)

\bibitem{rubinov2013abstract}
Rubinov, A.: Abstract convexity and global optimization, vol.~44.
\newblock Springer Science \& Business Media (2013)

\bibitem{rubner1997earth}
Rubner, Y., Guibas, L., Tomasi, C.: The earth mover's distance,
  multi-dimensional scaling, and color-based image retrieval.
\newblock In: Proceedings of the ARPA Image Understanding Workshop, pp.
  661--668 (1997)

\bibitem{cedric2003topics}
Villani, C.: Topics in optimal transportation.
\newblock 58. American Mathematical Soc. (2003)

\bibitem{villani2009oldnew}
Villani, C.: Optimal Transport: Old and New, \emph{Grundlehren der
  mathematischen Wissenschaften}, vol. 338.
\newblock Springer (2009)

\end{thebibliography}

\end{document}